\definecolor{lime}{HTML}{A6CE39}
\DeclareRobustCommand{\orcidicon}{%
	\begin{tikzpicture}
		\draw[lime, fill=lime] (0,0) 
		circle [radius=0.16] 
		node[white] {{\fontfamily{qag}\selectfont \tiny ID}};
		\draw[white, fill=white] (-0.0625,0.095) 
		circle [radius=0.007];
	\end{tikzpicture}
	\hspace{-2mm}
}
\DeclareMathOperator{\Cl}{Cl}
\DeclareMathOperator{\disc}{disc}
\DeclareMathOperator{\End}{End}
\DeclareMathOperator{\Gal}{Gal}
\DeclareMathOperator{\sgn}{sgn}
\DeclareMathOperator{\El}{Ell}
\DeclareMathOperator{\ord}{ord}
\newcommand{\QED}{\hspace{\stretch{1}} $\blacksquare$}
\newcommand{\CC}{\mathbb{C}}
\newcommand{\HH}{\mathbb{H}}
\newcommand{\NN}{\mathbb{N}}
\newcommand{\QQ}{\mathbb{Q}}
\newcommand{\RR}{\mathbb{R}}
\newcommand{\ZZ}{\mathbb{Z}}
\renewcommand{\(}{\left\(}
\renewcommand{\)}{\right\)}
\renewcommand{\pmod}[1]{\,(\textup{mod}\,#1)}
\theoremstyle{plain}
\newtheorem{thm}{Theorem}
\newtheorem{lemma}[thm]{Lemma}
\newtheorem{cor}[thm]{Corollary}
\newtheorem{prop}[thm]{Proposition}
\theoremstyle{definition}
\newtheorem{ex}[thm]{Example}
\theoremstyle{remark}
\newtheorem{rem}[thm]{Remark}
\numberwithin{equation}{section}
\numberwithin{thm}{section}
\newcommand{\orcidSarth}{\href{https://orcid.org/0000-0002-1724-0305}{\orcidicon}}
\newcommand{\orcid}[1]{\href{https://orcid.org/#1}{\textcolor[HTML]{A6CE39}{\aiOrcid}}}
\begin{document}
	\title[On discriminants of minimal Polynomials of the Ramanujan
	$t_n$ Class Invariants]{On discriminants of minimal Polynomials of the Ramanujan $t_n$ Class Invariants}
	\author[Sarth Chavan]{Sarth Chavan\orcidSarth}
	\thanks{The author was supported by The 2022 Spirit of Ramanujan Fellowship and The Mehta Fellowship}
	\address{$^{1}$ Department of Mathematics, Massachusetts Institute of Technology, Cambridge, MA, USA.}
	\email{schavan@mit.edu}
	\address{$^{2}$ Euler Circle, Palo Alto, California 94306, USA.}
	\email{sarth5002@outlook.com}
	\date{\today}
	\keywords{Elliptic curves, Hilbert class polynomials, Class invariant, $j$-invariant, Discriminant.}
	\subjclass{Primary: 11R29, 11R09; Secondary: 11F03, 11R37.}
	\dedicatory{Dedicated to all my Rickoid friends who turned into a family}
	\begin{abstract}
		We study the discriminants of the minimal polynomials $\mathcal{P}_n$ of the Ramanujan $t_n$ class invariants, which are defined for positive $n\equiv11\pmod{24}$. 
		The historical precedent for doing so comes from Gross and Zagier, which is known for computing the prime factorizations of certain resultants and discriminants of the Hilbert class polynomials $H_n$. We show that $\Delta\left(\mathcal{P}_n\right)$  divides $\Delta\left(H_n\right)$ with quotient a perfect square, and as a consequence, we explicitly determine the sign of $\Delta\left(\mathcal{P}_n\right)$ based on the class group structure of the order of discriminant $-n$.  
		We also show that the discriminant of number field generated by $j\left(\frac{-1+\sqrt{-n}}{2}\right)$, where $j$ is the $j$-invariant, divides $\Delta\left(\mathcal{P}_n\right)$.
		Moreover, we show that 3 never divides $\Delta\left(\mathcal{P}_n\right)$ for all  squarefree positive integers $n\equiv11\pmod{24}$. 
		
	\end{abstract}
	\maketitle
\section{Introduction and Main Results}	

Let $E/\CC$ be an elliptic curve $E$ over  $\CC$ that has \emph{complex multiplication} (CM) by an imaginary quadratic order  $\mathcal{O}$, by which we mean that the endomorphism ring $\End(E)$ is isomorphic to $\mathcal{O}$. 
Let $K$ denote the fraction field of $\mathcal{O}$. The $j$-invariant of $E$ is an algebraic
integer whose minimal polynomial over $K$ is the \emph{Hilbert class polynomial $H_n$}\footnote{Note that most authors use the term \emph{Hilbert class polynomial} only when $\mathcal{O}$ is a maximal order (they then use the term \emph{ring class polynomial} for the general case);  however, we will not make this distinction.}, 
where $n$ is the discriminant of
$\mathcal{O}$.

In particular, the Hilbert class polynomial $H_n$ is defined by
\[H_n(x)\colonequals\prod_{j(E)\in\El_{\mathcal{O}}(\CC)}\left(x-j(E)\right),\]
where $\El_{\mathcal{O}}(\CC)\colonequals\{j(E/\CC):\End(E)\cong\mathcal{O}\}$ is the set of $j$-invariants of elliptic curves $E/\CC$ with complex multiplication by the imaginary quadratic order $\mathcal{O}$ with discriminant $n = \disc(\mathcal{O})$.


Moreover, $H_n \in \ZZ[x]$ and its splitting field over the imaginary quadratic field $K$ is the \emph{ring class field} $K_{\mathcal{O}}$, which is an abelian extension of $K$ whose Galois group $\Gal\left(K_{\mathcal{O}}/K\right)$ is isomorphic to the class group $\Cl\left(\mathcal{O}\right)$, via the Artin map.
This is indeed a remarkable result as it implies that of  uncountably many isomorphism classes of elliptic curves over $\CC$, only countably many have CM.

Ramanujan, who made many beautiful and elegant discoveries in his short life of 32 years, defined in his third notebook \cite[Pages 392-393]{SR} the values
\begin{equation*}
	t_n\colonequals\dfrac{f\left(\sqrt[3]{q_n}\right)f\left(q_n^3\right)}{f^2\left(q_n\right)}\sqrt{3}q_n^{1/18},
\end{equation*}
where $q_n=\exp\left(-\pi\sqrt{n}\right)$, and
$	f\left(-q\right)=\prod_{n\geqslant1}\left(1-q^n\right).$

For all positive $n\equiv11\pmod{24}$, let $\mathcal{P}_n$ be the minimal polynomial of $t_n$ over  $\QQ$. 
Without any further explanation on how he found them, Ramanujan gave the following table of polynomials $\mathcal{P}_n$ based on $t_n$ for first five values of $n \equiv11\pmod{24}$:
\begin{table}[h]
		\centering
		\begin{tabular}{|c|c|}
			\hline
			\textbf{$n$} & \textbf{$\mathcal{P}_n(z)$} \\ \hline
			$11$         & $z-1$                       \\ \hline
			$35$         & $z^2+z-1$                   \\ \hline
			$59$         & $z^3+2z-1$                  \\ \hline
			$83$         & $z^3+2z^2+2z-1$             \\ \hline
			$107$        & $z^3-2z^2+4z-1$             \\ \hline
		\end{tabular}
	\vspace{0.1in}
		\caption{$\mathcal{P}_n$ for $n=11,35,59,83,107.$}
	\end{table}

Berndt and Chan \cite[Theorem 1.2]{Bruce} later verified his claims for $n=11,35,59,83$, and $107$ using laborious  computations involving Greenhill polynomials and Weber class invariants, and proved that each $\mathcal{P}_n$ has $t_n$ as a root. However, due to computational complexity, their method to contruct $\mathcal{P}_n$ could not be applied for higher values of $n$. Thus, they asked for an efficient way of computing the polynomials $\mathcal{P}_n$ for every $n \equiv11\pmod{24}$. Moreover, the authors also proved the following:
\begin{thm}[{\cite[Theorem 4.1]{Bruce}}]\label{BB}
	Let $n \equiv 11 \pmod{24}$ be squarefree, and suppose that the class number of $\QQ(\sqrt{-n})$ is odd. Then $t_n$ is a real unit generating the Hilbert class field of $\QQ(\sqrt{-n})$. 
\end{thm}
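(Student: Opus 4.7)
The strategy is to express $t_n$ as an eta quotient evaluated at a CM point, show this is a class invariant in the sense of Weber--Schertz, and then use the oddness of $h(-n)$ to pin down the field it generates.

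\emph{Step 1 (eta-quotient form).} Using $\eta(a\tau)=q^{a/24}\prod_{m\geq 1}(1-q^{am})$ with $q=e^{2\pi i\tau}$, take $\tau_0=i\sqrt{n}/2$ so that $q=q_n$. Substituting into the definition of $t_n$ and verifying the exponent cancellation $\tfrac{1}{18}-\tfrac{1}{72}-\tfrac{1}{8}+\tfrac{1}{12}=0$ produces
\[
t_n=g(\tau_0),\qquad g(\tau)\colonequals\sqrt{3}\,\frac{\eta(\tau/3)\,\eta(3\tau)}{\eta(\tau)^{2}}.
\]
Equivalently $g(3\tau)=\sqrt{3}\,\eta(\tau)\eta(9\tau)/\eta(3\tau)^{2}$, and the eta transformation law identifies this with a well-known modular function on $\Gamma_{0}(9)$ (up to roots of unity) that appears in Ramanujan's cubic modular equation; in particular, a polynomial relation between $g$ and $j$ is available via the level-$9$ modular polynomial.

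\emph{Step 2 (class invariance and units).} A priori $g(\tau_{0})$ lies in the ring class field of the order $\mathcal{O}_{\tau_{0}}=\ZZ[2i\sqrt{n}]$, which has conductor $4$ over $\mathcal{O}_{K}$ where $K=\QQ(\sqrt{-n})$ (using that $n\equiv 3\pmod 4$, so that $-n$ is the fundamental discriminant). The heart of the theorem is a Shimura reciprocity / Schertz-type \emph{descent} showing that the value of $g$ at this CM point of the non-maximal order nonetheless lies in the Hilbert class field $H$ of $K$. The key input is that $n\equiv 11\pmod{24}$ is a unit modulo $24$, so the stabilizer matrices in $\SL_{2}(\ZZ/9\ZZ)$ linking the two ring class fields act trivially on $g$. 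That $g(\tau_{0})$ is an algebraic unit then follows from Kubert--Lang, since the divisor of the eta quotient $g$ on the relevant modular curve is supported entirely at cusps. Realness follows because $\tau\mapsto-\bar\tau$ fixes $\tau_{0}$, the $q$-expansion of $g$ has rational coefficients, and the prefactor $\sqrt{3}$ is real; the congruence $n\equiv 11\pmod{24}$ is precisely what kills any residual $24$-th root of unity.

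\emph{Step 3 (primitivity).} Since $h(-n)$ is odd, the $2$-torsion subgroup of $\Cl(\mathcal{O}_{K})$ is trivial, so the maximal real subfield $H^{+}\subset H$ satisfies $[H^{+}:\QQ]=h(-n)$ and $H=KH^{+}$. From $t_{n}\in H\cap\RR=H^{+}$, it remains only to show that $t_{n}$ has degree exactly $h(-n)$ over $\QQ$, equivalently that $g$ separates the $h(-n)$ CM conjugates of $\tau_{0}$ under the Artin action of $\Cl(\mathcal{O}_{K})$. This separation is the main obstacle: it reduces to a Schertz-type class-invariant criterion verifying that $g$ has no nontrivial stabilizer in this action, and that the $24$-th root ambiguity introduced by the $\eta$-quotient does not collapse any orbit. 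Once established, $\QQ(t_{n})=H^{+}$ and therefore $K(t_{n})=H$, completing the proof.
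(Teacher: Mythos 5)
First, a point of comparison: the paper does not prove Theorem~\ref{BB} at all --- it is imported verbatim from Berndt and Chan \cite[Theorem 4.1]{Bruce} and used as a black box --- so there is no internal proof to measure yours against. Judged on its own, your outline has the right general shape (eta quotient at a CM point, Shimura reciprocity, modular units), but it contains one concrete error and two gaps that are fatal as written.

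The error is in Step 1. The convention is $f(-q)=\prod_{m\geqslant 1}(1-q^m)$, so $f$ evaluated at the \emph{positive} arguments $q_n^{1/3},q_n,q_n^3$ is $f(x)=\prod_{m\geqslant1}\bigl(1-(-x)^m\bigr)$, not $\prod_{m\geqslant1}(1-x^m)$. Writing $-q_n=e^{2\pi i(1+\sqrt{-n})/2}$ and tracking the resulting roots of unity, one finds (as Konstantinou--Kontogeorgis do in \cite{KK1}) that
\[
t_n=\pm\sqrt{3}\,\frac{\eta(\theta/3)\,\eta(3\theta)}{\eta(\theta)^2},\qquad \theta=\frac{3+\sqrt{-n}}{2},
\]
which is a CM point for the \emph{maximal} order of discriminant $-n$ (legitimate since $n\equiv 3\pmod 4$), not for the conductor-$4$ order attached to your $\tau_0=i\sqrt{n}/2$. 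The discrepancy is numerically visible already at $n=11$: your expression gives $g(i\sqrt{11}/2)\approx 0.94$, whereas $t_{11}=1$. This error then forces the spurious ``descent'' of Step 2: the ring class field of conductor $4$ has degree $6h_n$ over $K$ here (since $2$ is inert when $-n\equiv 5\pmod 8$), so the asserted triviality of the stabilizer in $\SL_2(\ZZ/9\ZZ)$ is carrying an enormous unproved burden; with the correct CM point no descent between orders is needed.

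Two further gaps. (i) ``Divisor supported at the cusps, hence $g(\theta)$ is an algebraic unit by Kubert--Lang'' is not a valid implication: CM values of modular units are in general only $S$-units, with $S$ the primes dividing the level --- here the prime $3$, which visibly threatens to appear through the prefactor $\sqrt{3}$ and the $27$ in $\mathfrak{j}_n=(t_n^6-27t_n^{-6}-6)^3$. Proving $t_n$ is an honest unit requires showing $3$ does not divide it, and this is exactly where the hypothesis $n\equiv11\pmod{24}$ (which forces $-n\equiv1\pmod 3$, so $3$ splits in $K$ --- the same fact driving Theorem~\ref{3}) must be used; your outline never invokes it for this purpose. (ii) Step 3 defers the central claim: you reduce generation of the Hilbert class field to ``a Schertz-type class-invariant criterion'' and then write ``once established'' --- but verifying that criterion for this particular eta quotient \emph{is} the substance of the theorem, so as written the argument is circular. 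For reference, Berndt and Chan argue through explicit modular equations linking $t_n$ to $j$ and to Weber-type invariants, while \cite{KK1} later carries out the Shimura reciprocity computation you gesture at, at the correct point $\theta$.
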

Ten years later, Konstantinou and Kontogeorgis \cite{KK1} generalized this result by removing the constraint of the class number needing to be odd and also provided an efficient method for constructing the minimal polynomials $\mathcal{P}_n$ of $t_n$ over $\mathbb{Q}$ from the Ramanujan values $t_n$ for $n \equiv 11 \pmod{24}$, using the Shimura reciprocity law, and thus answered the demand made in \cite{Bruce} for a direct and an easily applicable construction method. Moreover, the authors also proved that the Ramanujan value $t_n$ is a class invariant for $n \equiv 11 \pmod{24}$ \cite[Theorem 3.4]{KK1}. Therefore, it follows that  degree of $\mathcal{P}_n$ equals the class number of the order of discriminant $-n$ 
for all positive $n \equiv 11 \pmod{24}$.

Let $\mathfrak{j}_n=j\left(\frac{-1+\sqrt{-n}}{2}\right)$, for all $n\in\NN$, where $j$ denotes the $j$-invariant as defined in Subsection \ref{sub}. In a follow-up paper in 2010, Konstantinou and Kontogeorgis proved the following:
\begin{prop}[{\cite[Lemma 3]{KKP2}}]\label{ROOT}
	Suppose $R_\mathcal{P}$ is a real root of a Ramanujan polynomial $\mathcal{P}_n$. Then, the real number $R_H$ obtained from the equation
	\[R_H=\left(R_{\mathcal{P}}^6 - 27R_\mathcal{P}^{-6}-6\right)^3,\]
	is a real root of the corresponding Hilbert class polynomial $H_n$.
\end{prop}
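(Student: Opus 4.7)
The plan is to deduce the statement from a single modular identity relating the Ramanujan function $t$ to the $j$-invariant, and then to propagate that identity across all Galois conjugates using Shimura reciprocity.

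First, I would promote $t_n$ to a modular function $t(\tau)$ on the upper half-plane. Using the relation $f(-q)=q^{-1/24}\eta(\tau)$ with $q=e^{2\pi i\tau}$, the Ramanujan value $t_n$ is the specialization at the CM point associated to discriminant $-n$ of the eta quotient
\[t(\tau)=\sqrt{3}\,q^{1/18}\,\frac{\eta(\tau/3)\,\eta(3\tau)}{\eta(\tau)^{2}},\]
and a suitable power is a modular function for a congruence subgroup $\Gamma\supseteq\Gamma_0(9)$. The crucial step will be to establish the functional identity
\[j(\tau)=\bigl(t(\tau)^{6}-27\,t(\tau)^{-6}-6\bigr)^{3}\]
as meromorphic functions on $\Gamma\backslash\mathbb{H}^{*}$. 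I would verify this by matching $q$-expansions and pole/zero divisors on the modular curve $X(\Gamma)$: comparing the leading singular term at the cusp $\infty$ together with a degree count pins down the identity uniquely. Alternatively, the identity may be read off from the Shimura-reciprocity construction of $\mathcal{P}_n$ in \cite{KK1}, since the underlying transformation formulas express $j$ as an explicit rational function of the class invariant.

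Specializing at the CM point yields $\mathfrak{j}_n=\bigl(t_n^{6}-27\,t_n^{-6}-6\bigr)^{3}$, which handles the case $R_{\mathcal{P}}=t_n$. For a general real root $R_{\mathcal{P}}$ of $\mathcal{P}_n$, I would invoke the fact, implicit in Theorem~\ref{BB} and its extension in \cite{KK1}, that $t_n$ is a unit generating the ring class field $K_{\mathcal{O}}$, so that Shimura reciprocity identifies the roots of $\mathcal{P}_n$ with the $\Gal(K_{\mathcal{O}}/K)$-orbit of $t_n$ and the roots of $H_n$ with the orbit of $\mathfrak{j}_n$. Since every conjugate of $t_n$ is a unit and hence nonzero, the expression $F(x)=(x^{6}-27x^{-6}-6)^{3}$ may be applied conjugate-wise: if $\sigma\in\Gal(K_{\mathcal{O}}/K)$ satisfies $\sigma(t_n)=R_{\mathcal{P}}$, then applying $\sigma$ to the specialization gives $\sigma(\mathfrak{j}_n)=F(R_{\mathcal{P}})=R_H$, and thus $R_H$ is a root of $H_n$. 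Reality of $R_H$ is automatic, since it is a polynomial expression in the real number $R_{\mathcal{P}}$.

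The main obstacle will be the verification of the modular identity $j=\bigl(t^{6}-27t^{-6}-6\bigr)^{3}$; once that is in hand, the rest of the argument is a routine transport of structure via the Artin map of class field theory. A minor book-keeping issue concerns reconciling the Ramanujan convention $q_n=e^{-\pi\sqrt{n}}$, which naturally places the evaluation at $\tau=i\sqrt{n}/2$, with the CM point $\tau_n=\frac{-1+\sqrt{-n}}{2}$ used for $\mathfrak{j}_n$; invariance of $j$ under $\tau\mapsto\tau+1$ together with the explicit behaviour of the eta quotient under the same translation shows that the two conventions produce equivalent identities after a harmless rescaling.
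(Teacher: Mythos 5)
Note first that this paper does not actually prove Proposition \ref{ROOT}: it is imported verbatim from \cite[Lemma 3]{KKP2}, and the only thing the present text records about the argument is the specialization $\mathfrak{j}_n=\left(t_n^6-27t_n^{-6}-6\right)^3$. Your outline is a faithful reconstruction of how that cited proof goes --- establish a modular identity expressing $j$ as an explicit rational function of the eta quotient underlying $t_n$ (in \cite{KKP2} and in Berndt--Chan this is done through known level-$3$/level-$9$ eta-quotient identities rather than by a divisor/degree count on a modular curve, but that is a cosmetic difference), specialize at the CM point, and transport to all conjugates. The Galois step of your argument is sound and complete as sketched: since $t_n$ is a class invariant generating the ring class field (\cite[Theorem 3.4]{KK1}), the $\Gal(K_{\mathcal O}/K)$-orbit of $t_n$ has exactly $h_n$ elements and therefore exhausts the roots of $\mathcal{P}_n$, the map $x\mapsto(x^6-27x^{-6}-6)^3$ has rational coefficients and is defined at every conjugate because $t_n$ is a unit, and reality of $R_H$ is immediate. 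Your bookkeeping remark about $q_n=e^{-\pi\sqrt n}$ versus the CM point $\frac{-1+\sqrt{-n}}{2}$ is also the right thing to worry about (the sign in $f(-q)=(q;q)_\infty$ is precisely what shifts the evaluation point to $\frac{1+\sqrt{-n}}{2}$, which differs from $\frac{-1+\sqrt{-n}}{2}$ by a translation under which $j$ is invariant).

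The one place where your write-up is a proposal rather than a proof is the identity $j(\tau)=\bigl(t(\tau)^6-27t(\tau)^{-6}-6\bigr)^3$ itself: that single equation carries essentially all of the mathematical content of the proposition, and you defer its verification. The method you propose (pin down both sides as functions on a common modular curve by comparing $q$-expansions and counting poles) is standard and will work, but until it is carried out the proof is not complete; be aware also that the group of invariance is a subgroup of level $9$ after the rescaling $\tau\mapsto 3\tau$, not a group containing $\Gamma_0(9)$ as written. As a sanity check that the identity is the correct one: for $n=11$ one has $t_{11}=1$, and $(1-27-6)^3=-32768=j\bigl(\tfrac{1+\sqrt{-11}}{2}\bigr)$, consistent with Table 1.
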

In their proof \cite[Pg. 12]{KKP2}, the authors show that, letting $R_{\mathcal{P}}=t_n$ gives $
	\mathfrak{j}_n=\left(t_n^6-27t_n^{-6}-6\right)^3.$

It is interesting to point out that coefficients of the polynomial $\mathcal{P}_n$ have remarkably smaller size compared to the coefficients of the corresponding Hilbert class polynomial $H_n$, which is a clear indication that their use in the CM method, which is used for the generation of elliptic curves over prime fields, can be especially favoured. Proposition \ref{ROOT} also suggests that the polynomials $\mathcal{P}_n$ can be used in the CM method because their roots can be transformed to the roots of $H_n$. For more details on constructing elliptic curves with the CM method, see \cite{A, Broker, Lay}.

In this brief article we study the discriminant of $\mathcal{P}_n$; the historical precedent for doing so comes from \cite{GZ}, which is known for computing the prime factorization of certain resultants of Hilbert class polynomials. Gross and Zagier \cite{GZ} also computed the prime factorization of the discriminant of the Hilbert class polynomial associated to the fundamental discriminant $-p$, where $p\equiv 3\pmod{4}$ is a prime. This result was later generalized by Dorman \cite{Dorman}, who extended the discriminant formula to the Hilbert class polynomials associated with arbitrary fundamental discriminants. Dorman's result in turn was then extended by Ye \cite{Ye}, who computed the prime factorization of Hilbert class polynomials associated to certain non-fundamental discriminants. 
For more details, see \cite{GZ, Dorman, Ye}.  
\subsection{Notations}
Fix a positive integer $n\equiv11\pmod{24}$ and let $K=\QQ\left(\sqrt{-n}\right)$ be an imaginary quadratic number field. Let $h_n$ and $\Cl\left(n\right)$ denote the class number and ideal class group of the order of discriminant $-n$, respectively, and let $\Cl\left(n\right)\left[2\right]$ be the subgroup of $\Cl\left(n\right)$ consisting of elements of order at most $2$. Note that $\ZZ\left[\mathfrak{j}_n\right]$ is contained in $\ZZ\left[t_n\right]$, which follows from Section \ref{S3}.
\subsection{Main results} Our first main result, which relates $\Delta\left(H_n\right)$ to $\Delta\left(\mathcal{P}_n\right)$, is as follows:
\begin{thm}\label{thm2}
	For all positive $n\equiv11\pmod{24}$, we have
	\begin{equation*}
		\Delta\left(H_n\right)=\Delta\left(\mathcal{P}_n\right)\left[\ZZ\left[t_n\right]:\ZZ\left[\mathfrak{j}_n\right]\right]^2,
	\end{equation*}
where  $\left[\ZZ\left[t_n\right]:\ZZ\left[\mathfrak{j}_n\right]\right]$ is the index of $\ZZ\left[\mathfrak{j}_n\right]$ in $\ZZ\left[t_n\right]$.
\end{thm}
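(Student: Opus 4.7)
The plan is to interpret both discriminants as discriminants of orders in a common number field and then apply the standard conductor-discriminant relation for nested orders.

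First I would identify the ambient field. Since $\ZZ[\mathfrak{j}_n] \subseteq \ZZ[t_n]$ (as stated in the Notations subsection and verified in Section~\ref{S3}), we have $\QQ(\mathfrak{j}_n) \subseteq \QQ(t_n)$. On the other hand, the Konstantinou--Kontogeorgis result cited in the introduction says that $\deg \mathcal{P}_n = h_n$, and it is classical that $\deg H_n = h_n$ as well. Therefore $[\QQ(t_n):\QQ] = [\QQ(\mathfrak{j}_n):\QQ] = h_n$, so the two subfields coincide; call this common number field $L$. In particular, $\ZZ[\mathfrak{j}_n]$ and $\ZZ[t_n]$ are both $\ZZ$-orders in $L$ of full rank $h_n$, and hence the index $[\ZZ[t_n]:\ZZ[\mathfrak{j}_n]]$ is finite.

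Next I would invoke the standard identification between the polynomial discriminant of the minimal polynomial of a primitive element $\alpha$ of $L$ and the discriminant of the monogenic order $\ZZ[\alpha]$:
\begin{equation*}
\Delta(H_n) \;=\; \disc\bigl(\ZZ[\mathfrak{j}_n]\bigr), \qquad \Delta(\mathcal{P}_n) \;=\; \disc\bigl(\ZZ[t_n]\bigr).
\end{equation*}
Finally, for any pair of nested full-rank $\ZZ$-orders $R \subseteq S$ in $L$, the change-of-basis matrix has determinant $\pm[S:R]$, so
\begin{equation*}
\disc(R) \;=\; [S:R]^2 \cdot \disc(S).
\end{equation*}
Applying this with $R=\ZZ[\mathfrak{j}_n]$ and $S=\ZZ[t_n]$ yields the asserted formula.

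The only substantive input beyond the conductor formula is the containment $\ZZ[\mathfrak{j}_n]\subseteq\ZZ[t_n]$, which is where the arithmetic of the Ramanujan values enters. The natural route is via Proposition~\ref{ROOT}: the identity $\mathfrak{j}_n = (t_n^6 - 27 t_n^{-6} - 6)^3$ a priori expresses $\mathfrak{j}_n$ only as an element of $\ZZ[t_n, t_n^{-1}]$, so the work is in showing $t_n$ is a unit in $\ZZ[t_n]$ (so that $t_n^{-1}\in\ZZ[t_n]$). This unit property follows from Theorem~\ref{BB} together with the Konstantinou--Kontogeorgis generalization removing the odd-class-number hypothesis. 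I expect this verification, deferred to Section~\ref{S3}, to be the main obstacle; once it is in hand, the theorem is a one-line consequence of the conductor relation.
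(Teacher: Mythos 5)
Your proposal is correct and follows essentially the same route as the paper: establish $\ZZ[\mathfrak{j}_n]\subseteq\ZZ[t_n]$ via the identity $\mathfrak{j}_n=\left(t_n^6-27t_n^{-6}-6\right)^3$ of Proposition \ref{ROOT} together with the fact that $t_n$ is a unit (so $t_n^{-1}\in\ZZ[t_n]$), then apply the index--discriminant relation of Proposition \ref{index}. The only cosmetic difference is that you justify the equality $\QQ(\mathfrak{j}_n)=\QQ(t_n)$ by a degree count on $H_n$ and $\mathcal{P}_n$, whereas the paper proves it separately (Proposition \ref{imp}) by a field-diagram argument; both are valid.
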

\begin{rem}
	Since the quotient $\left[\ZZ\left[t_n\right]:\ZZ\left[\mathfrak{j}_n\right]\right]^2$ is a perfect square, we deduce that $	\Delta\left(H_n\right)$ and $\Delta\left(\mathcal{P}_n\right)$ have the same sign for all positive integers $n\equiv11\pmod{24}$.
\end{rem}
Let $\mathcal{D}\left(F\right)$ denote the discriminant of a number field $F$. Our next main result is the following:
\begin{thm}\label{thm1}
	For all positive integers $n\equiv11\pmod{24}$, we have
	\[\Delta\left(\mathcal{P}_n\right)=\left[\mathcal{O}_{\QQ\left(\mathfrak{j}_n\right)} : \ZZ[t_n]\right]^2,\] 
where  $\left[\mathcal{O}_{\QQ\left(\mathfrak{j}_n\right)} : \ZZ[t_n]\right]$ is the index of $\ZZ\left[t_n\right]$ inside $\mathcal{O}_{\QQ\left(\mathfrak{j}_n\right)}$.
\end{thm}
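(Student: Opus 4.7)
The plan is to reduce the theorem to the classical formula of algebraic number theory,
\[
\Delta(f) \;=\; \left[\mathcal{O}_L : \ZZ[\alpha]\right]^{2} \cdot \mathcal{D}(L),
\]
valid for any algebraic integer $\alpha \in L = \QQ(\alpha)$ with monic minimal polynomial $f \in \ZZ[x]$. Applied to $\alpha = t_n$, $f = \mathcal{P}_n$ and $L = \QQ(t_n)$, this reduces Theorem \ref{thm1} to the identification of fields $\QQ(t_n) = \QQ(\mathfrak{j}_n)$, together with standard bookkeeping involving $\mathcal{D}(\QQ(\mathfrak{j}_n))$.

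First I would verify $\QQ(t_n) = \QQ(\mathfrak{j}_n)$. The inclusion $\QQ(\mathfrak{j}_n) \subseteq \QQ(t_n)$ follows at once from Proposition \ref{ROOT}, which writes $\mathfrak{j}_n$ as a Laurent polynomial expression in $t_n$; for this we use that $t_n$ is a unit in the ring of integers (Theorem \ref{BB} and its extension in \cite{KK1}), so that $t_n^{-1} \in \QQ(t_n)$. The reverse inclusion is a degree count: since $t_n$ is a class invariant for the order of discriminant $-n$ (by \cite[Theorem 3.4]{KK1}), we have $\deg\mathcal{P}_n = h_n = \deg H_n$, hence $[\QQ(t_n):\QQ] = [\QQ(\mathfrak{j}_n):\QQ]$ and the two subfields of the ring class field must coincide.

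Next I would invoke the classical formula directly. Writing $\omega_1,\dots,\omega_{h_n}$ for an integral basis of $\mathcal{O}_{\QQ(\mathfrak{j}_n)}$ and $1, t_n, \dots, t_n^{h_n - 1}$ as a $\ZZ$-basis of $\ZZ[t_n]$, the change-of-basis matrix has determinant (up to sign) equal to the index $[\mathcal{O}_{\QQ(\mathfrak{j}_n)}:\ZZ[t_n]]$, while the trace-form Gram determinants on the two lattices evaluate to $\Delta(\mathcal{P}_n)$ and $\mathcal{D}(\QQ(\mathfrak{j}_n))$ respectively. Squaring the change-of-basis determinant and comparing trace forms yields the asserted identity.

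There is essentially no substantive obstacle: the only non-bookkeeping step is the identification $\QQ(t_n) = \QQ(\mathfrak{j}_n)$, and that relies only on results already cited in the introduction. The sole point requiring care is the integrality of $t_n^{-1}$, which is needed in order to interpret Proposition \ref{ROOT} inside the order $\ZZ[t_n]$ rather than merely in its fraction field; this is guaranteed by the fact that $t_n$ is a global unit, as recorded in Theorem \ref{BB}.
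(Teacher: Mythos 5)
Your proposal follows essentially the same route as the paper: the paper's Section \ref{S4} likewise applies the index--discriminant relation (Proposition \ref{index}, which is exactly your classical formula) to the pair $\ZZ[t_n]\subseteq\mathcal{O}_{\QQ(\mathfrak{j}_n)}$, after establishing $\QQ(t_n)=\QQ(\mathfrak{j}_n)$. Your argument for that field equality is a little slicker than the paper's Proposition \ref{imp}: you get $\QQ(\mathfrak{j}_n)\subseteq\QQ(t_n)$ from Proposition \ref{ROOT} and then compare degrees using $\deg\mathcal{P}_n=h_n=[\QQ(\mathfrak{j}_n):\QQ]$, whereas the paper runs a tower-law argument through the ring class field $H$ and uses the reality of $\mathfrak{j}_n$ and $t_n$ to pin down both degrees; both routes are sound. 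The one point you should not leave as ``standard bookkeeping'' is the factor $\mathcal{D}\bigl(\QQ(\mathfrak{j}_n)\bigr)$: your method (and the paper's own proof) actually yields
\[
\Delta\left(\mathcal{P}_n\right)=\left[\mathcal{O}_{\QQ\left(\mathfrak{j}_n\right)}:\ZZ[t_n]\right]^2\cdot\mathcal{D}\left(\QQ\left(\mathfrak{j}_n\right)\right),
\]
and this extra factor is genuinely present and genuinely different from $1$ in general (by Dorman's formula quoted in Remark \ref{rem}, e.g.\ $\mathcal{D}(\QQ(\mathfrak{j}_{227}))=227^2$). So the identity as printed in Theorem \ref{thm1} cannot hold literally; it is evidently missing the discriminant factor, which is what makes the abstract's claim that $\mathcal{D}(\QQ(\mathfrak{j}_n))$ divides $\Delta(\mathcal{P}_n)$ follow. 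Your proof is correct for the corrected statement, but you should state explicitly that no bookkeeping removes $\mathcal{D}(\QQ(\mathfrak{j}_n))$ rather than suggesting it can be absorbed.
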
  
\begin{rem}\label{rem}
	Note that Dorman explicitly computed $\mathcal{D}\left(\QQ\left(\mathfrak{j}_n\right)\right)$ in \cite{Dorman}. More precisely, he proved that, for a squarefree positive integer $n\equiv11\pmod{24}$, we have \cite[Proposition 5.1]{Dorman} $$\mathcal{D}\left(\QQ\left(\mathfrak{j}_n\right)\right)=D_0^{\frac{h_n}{2}}\cdot D_1^{\frac{h_n-2^{\mathfrak{t}-1}}{2}},$$ where where $\mathfrak{t}$ is the number of distinct prime factors of $n$, and $n=D_0 D_1$ with 
	\[D_1=\begin{cases}
		1 & \textrm{if at least } 2 \textrm{ primes congruent to } $3$ \textrm{ mod } 4 \textrm{ divide } D,
		\\
		p  & \textrm{if } p \textrm{ is the unique prime congruent to } 3 \textrm{ mod } 4 \textrm{ dividing } D.
	\end{cases}\]
\end{rem}
Next, we explicitly determines the sign of $\Delta\left(\mathcal{P}_n\right)$.
\begin{thm}\label{sign}
	For all positive $n\equiv11\pmod{24}$, $\Delta\left(\mathcal{P}_n\right)>0$ if and only if 
	\begin{equation*}
	h_n \equiv \left|\Cl\left(n\right)\left[2\right]\right| \pmod{4},
	\end{equation*}
where $\Cl\left(n\right)\left[2\right]$ is the subgroup of $\Cl\left(n\right)$ consisting of elements of order at most $2$.
\end{thm}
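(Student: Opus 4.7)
The plan is to reduce the sign of $\Delta\left(\mathcal{P}_n\right)$ to that of $\Delta\left(H_n\right)$ by applying Theorem~\ref{thm2}, and then to determine the sign of $\Delta\left(H_n\right)$ by counting the real roots of $H_n$ through the action of complex conjugation on the class group.

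First, Theorem~\ref{thm2} gives
\[
\Delta\left(H_n\right) = \Delta\left(\mathcal{P}_n\right)\cdot\left[\ZZ\left[t_n\right]:\ZZ\left[\mathfrak{j}_n\right]\right]^2,
\]
and the right-hand square factor is a positive integer; consequently $\mathcal{P}_n$ and $H_n$ have discriminants of the same sign. I would next invoke the standard fact that for a monic polynomial $P\in\RR[x]$ of degree $d$ with $r$ real roots and $s=(d-r)/2$ pairs of complex-conjugate roots, $\mathrm{sign}\,\Delta(P)=(-1)^s$: in the product $\Delta(P)=\prod_{i<j}(\alpha_i-\alpha_j)^2$, the only negative contributions come from the $s$ factors $(\alpha_k-\bar{\alpha}_k)^2=-4\left(\mathrm{Im}\,\alpha_k\right)^2$, while every remaining factor pairs with its complex conjugate into a positive real. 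Applied to $H_n$, which has degree $h_n$ and integer coefficients, this shows that $\Delta\left(H_n\right)>0$ if and only if $h_n\equiv r\pmod{4}$, where $r$ is the number of real roots of $H_n$.

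The crux of the argument is then the classical identity $r=\left|\Cl(n)[2]\right|$. The roots of $H_n$ are the singular moduli $j\left(\tau_{[\mathfrak{a}]}\right)$ indexed by $[\mathfrak{a}]\in\Cl(n)$, and $\Gal\left(K_{\mathcal{O}}/K\right)\cong\Cl(n)$ acts simply transitively on them via the Artin map. Because $K=\QQ(\sqrt{-n})$ is imaginary quadratic, complex conjugation sends a proper $\mathcal{O}$-ideal to its inverse class, and a lift $\tilde c\in\Gal\left(K_{\mathcal{O}}/\QQ\right)$ of complex conjugation satisfies $\tilde c\,\sigma\,\tilde c^{-1}=\sigma^{-1}$ for every $\sigma\in\Gal\left(K_{\mathcal{O}}/K\right)$. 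Transporting this through the Artin map shows that $\tilde c$ sends $j\left(\tau_{[\mathfrak{a}]}\right)$ to $j\left(\tau_{[\mathfrak{a}]^{-1}}\right)$, so a root is real precisely when $[\mathfrak{a}]=[\mathfrak{a}]^{-1}$, that is, when $[\mathfrak{a}]\in\Cl(n)[2]$.

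Combining the three steps yields
\[
\Delta\left(\mathcal{P}_n\right)>0 \Longleftrightarrow h_n\equiv\left|\Cl(n)[2]\right|\pmod{4}.
\]
The main technical point I expect is verifying the equivariance of complex conjugation with the Artin map in the non-maximal order setting; this is handled by the theory of ring class fields, under which complex conjugation in $\Gal\left(K_{\mathcal{O}}/\QQ\right)$ corresponds to inversion on $\Cl(n)$, so that the count of real roots of $H_n$ matches $\left|\Cl(n)[2]\right|$ and the theorem follows.
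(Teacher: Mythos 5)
Your proposal is correct and follows the same overall strategy as the paper: transfer the sign from $\Delta\left(\mathcal{P}_n\right)$ to $\Delta\left(H_n\right)$ via Theorem \ref{thm2}, use the fact that the sign of the discriminant of a real polynomial is $(-1)^s$ with $s$ the number of conjugate pairs of non-real roots, and identify the real roots of $H_n$ with the classes in $\Cl\left(n\right)\left[2\right]$. The one place you diverge is in justifying that last count: the paper does it directly with lattices (Propositions \ref{CX1} and \ref{CX2}: $j(\overline{\mathfrak{a}})=\overline{j(\mathfrak{a})}$ together with $\overline{\mathfrak{a}}\sim\mathfrak{a}^{-1}$ and the homothety criterion), whereas you argue through the dihedral structure of $\Gal\left(K_{\mathcal{O}}/\QQ\right)$ and the Artin map. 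Your route works, but note that the relation $\tilde c\,\sigma\,\tilde c^{-1}=\sigma^{-1}$ alone only shows $\tilde c$ acts on the torsor of roots by $[\mathfrak{a}]\mapsto[\mathfrak{a}]^{-1}[\mathfrak{b}]$ for some fixed class $[\mathfrak{b}]$; to conclude $[\mathfrak{b}]$ is trivial and hence that the fixed-point count is exactly $\left|\Cl\left(n\right)\left[2\right]\right|$ (rather than possibly zero), you still need a real base point such as $j(\mathcal{O})\in\RR$ (the paper's Corollary \ref{J}), or equivalently the analytic identity $j(\overline{\mathfrak{a}})=\overline{j(\mathfrak{a})}$ that you implicitly invoke when saying conjugation sends an ideal to its inverse class. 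With that normalization made explicit, the two arguments are equivalent, and the paper's lattice version is arguably the more elementary of the two.
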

Note that using the ambiguous class number formula from genus theory, we find that for all positive squarefree inetgers  $n\equiv11\pmod{24}$, we have
\[\left|\Cl\left(n\right)\left[2\right]\right|=2^{\mathfrak{t}-1},\]
where $\mathfrak{t}$ is the number of distinct prime factors of $n$. Therefore, we have the following corollary:
\begin{cor}\label{ambicor}
		For all positive squarefree $n\equiv11\pmod{24}$,  $\Delta\left(\mathcal{P}_n\right)>0$ if and only if $$h_n\equiv2^{\mathfrak{t}-1}\pmod{4},$$
		where, as usual, $\mathfrak{t}$ denotes  the number of distinct prime factors of $n$.
\end{cor}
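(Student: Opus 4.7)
The plan is to obtain Corollary \ref{ambicor} as an immediate consequence of Theorem \ref{sign} once $\lvert\Cl(n)[2]\rvert$ is identified with $2^{\mathfrak{t}-1}$ via genus theory. Since Theorem \ref{sign} already gives the clean equivalence
\[
\Delta(\mathcal{P}_n)>0 \iff h_n \equiv \lvert\Cl(n)[2]\rvert \pmod{4},
\]
the only thing left to do is to evaluate $\lvert\Cl(n)[2]\rvert$ for squarefree $n\equiv 11\pmod{24}$.

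First I would observe that the hypothesis $n \equiv 11 \pmod{24}$ forces $n \equiv 3 \pmod 4$, so that $-n \equiv 1 \pmod 4$. Combined with the squarefreeness of $n$, this shows that $-n$ is a fundamental discriminant and that the order of discriminant $-n$ coincides with the maximal order $\mathcal{O}_K$ of $K=\QQ(\sqrt{-n})$; in particular, $\Cl(n)$ is the usual ideal class group of $K$. The set of rational primes ramifying in $K$ is then exactly the set of prime divisors of $\lvert D_K\rvert = n$, which has cardinality $\mathfrak{t}$.

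Next I would invoke the ambiguous class number formula from genus theory: for an imaginary quadratic field $K$ whose discriminant has $t$ distinct prime divisors,
\[
\lvert \Cl(K)[2]\rvert = 2^{t-1}.
\]
Applied to our $K$ this yields $\lvert\Cl(n)[2]\rvert = 2^{\mathfrak{t}-1}$. Substituting this value into the congruence provided by Theorem \ref{sign} gives exactly the statement
\[
\Delta(\mathcal{P}_n)>0 \iff h_n \equiv 2^{\mathfrak{t}-1} \pmod{4},
\]
which is Corollary \ref{ambicor}.

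There is really no genuine obstacle here beyond correctly citing the ambiguous class number formula in the right setting; the only mild subtlety is verifying that $-n$ is fundamental (so that the formula applies to the maximal order rather than to a proper suborder, and so that the count of ramified primes matches $\mathfrak{t}$), but this is immediate from $n \equiv 3 \pmod 4$ and the squarefreeness assumption.
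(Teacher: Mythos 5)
Your proposal is correct and follows exactly the paper's route: the paper likewise deduces the corollary by combining Theorem \ref{sign} with the ambiguous class number formula, which gives $\left|\Cl\left(n\right)\left[2\right]\right|=2^{\mathfrak{t}-1}$ for squarefree $n\equiv11\pmod{24}$. Your extra check that $-n$ is a fundamental discriminant (so the formula applies to the maximal order and $\mathfrak{t}$ counts the ramified primes) is a welcome detail the paper leaves implicit, but it does not change the argument.
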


Moreover, when the ideal class group is a cyclic group, that is $\Cl\left(n\right)\cong \left(\ZZ/h_n\ZZ\right)$, which is true for all $n\equiv11\pmod{24}$, with $11\leqslant n \leqslant 995$ as illustrated in Table 2, it is easy to see that
\[\left|\Cl\left(n\right)[2]\right| =\begin{cases}
	1 & \textrm{if } h_n \textrm{ is odd}\\
	2 & \textrm{if } h_n \textrm{ is even.}
\end{cases}\]
Thus, we have the following corollary:
\begin{cor}\label{signcor}
	For all $n\equiv11\pmod{24}$, if $\Cl\left(n\right) \cong \left(\ZZ/h_n\ZZ\right)$, then $\Delta\left(\mathcal{P}_n\right)>0$ if and only if $$h_n\equiv1,2\pmod{4}.$$
\end{cor}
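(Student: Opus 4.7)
The plan is to deduce this corollary directly from Theorem \ref{sign} by computing $\left|\Cl(n)[2]\right|$ under the cyclicity hypothesis and then doing a case split on the parity of $h_n$.

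First, assume $\Cl(n)\cong\ZZ/h_n\ZZ$. In a cyclic group of order $h_n$, the number of elements whose order divides $2$ equals $\gcd(h_n,2)$. Thus
\[
\left|\Cl(n)[2]\right|=\begin{cases}1 & \text{if } h_n \text{ is odd,}\\ 2 & \text{if } h_n \text{ is even.}\end{cases}
\]
I would record this as an elementary lemma (or simply quote it), since for $\ZZ/h_n\ZZ$ the subgroup of $2$-torsion is generated by $h_n/2$ when $h_n$ is even and is trivial otherwise.

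Next, I would invoke Theorem \ref{sign}, which says $\Delta(\mathcal{P}_n)>0$ if and only if $h_n\equiv\left|\Cl(n)[2]\right|\pmod{4}$, and split into two cases. If $h_n$ is odd, the congruence becomes $h_n\equiv 1\pmod{4}$, which singles out $h_n\equiv 1\pmod{4}$ among the two possible odd residue classes modulo $4$. If $h_n$ is even, the congruence becomes $h_n\equiv 2\pmod{4}$, which singles out $h_n\equiv 2\pmod{4}$ among the two possible even residue classes modulo $4$. Combining the two cases yields precisely the criterion $h_n\equiv 1,2\pmod 4$.

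There is essentially no obstacle here: the content is entirely in Theorem \ref{sign}, and the corollary is a bookkeeping exercise once one observes that a cyclic group has at most one subgroup of order $2$. The only minor subtlety worth a remark is that the hypothesis $\Cl(n)\cong\ZZ/h_n\ZZ$ is nontrivial: it is verified computationally in the range $11\leq n\leq 995$ listed in Table 2, but it can fail in general (for squarefree $n$ with $\mathfrak{t}\geq 2$ distinct prime factors, Corollary \ref{ambicor} applies instead, since then $\left|\Cl(n)[2]\right|=2^{\mathfrak{t}-1}\geq 2$ forces $\Cl(n)$ to be non-cyclic unless $\mathfrak{t}=1$).
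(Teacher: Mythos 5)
Your proof is correct and essentially identical to the paper's: the paper likewise observes that a cyclic group has $2$-torsion of size $1$ or $2$ according to the parity of $h_n$ and then reads off the two cases of Theorem \ref{sign}. One small correction to your closing aside: $\left|\Cl(n)[2]\right|=2^{\mathfrak{t}-1}\geq 2$ does \emph{not} force $\Cl(n)$ to be non-cyclic when $\mathfrak{t}=2$, since a cyclic group of even order already has two elements of order dividing $2$ (e.g.\ $n=35=5\cdot 7$ has $\mathfrak{t}=2$ and $\Cl(35)\cong\ZZ/2\ZZ$); non-cyclicity is forced only for $\mathfrak{t}\geq 3$. This slip is confined to the remark and does not affect the proof of the corollary.
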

\begin{rem}
	For example, the two positive values of $n \equiv 11 \pmod{24}$ for which $\Cl\left(n\right) \not\cong \ZZ/h_n\ZZ$ are 1235 and 2555. In particular, we find that\footnote{The structure of the ideal class group $\Cl\left(n\right)$ for positive $n \equiv 11 \pmod{24}$ was computed using Sage \cite{Sg}.} $\Cl\left(1235\right)\cong\Cl\left(2555\right)\cong\left(\ZZ/6\ZZ\right) \times \left(\ZZ/2\ZZ\right)$. 
\end{rem}In Table 2, it can be observed that 3 seems to be the only prime that never appears in the prime factorization of $\Delta\left(\mathcal{P}_n\right)$ for all positive $n \equiv 11 \pmod{24}$. To prove this, we show that 3 never divides $\Delta\left(H_n\right)$, and thus $\Delta\left(\mathcal{P}_n\right)$, using Ye's explicit computation of $\Delta\left(H_n\right)$ \cite[Corollary 1.2]{Ye} for squarefree $n \equiv 11 \pmod{24}$, for more details see Section  \ref{S6}. 

Thus, we have the following important Theorem:
\begin{thm}\label{3}
	For all positive squarefree integers $n\equiv11\pmod{24}$, we have $3 \nmid \Delta\left(\mathcal{P}_n\right)$.
\end{thm}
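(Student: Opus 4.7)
The plan is to reduce the claim to the analogous statement for the Hilbert class polynomial $H_n$ and then invoke Ye's explicit prime factorization. By Theorem \ref{thm2} we have
\[
\Delta\left(H_n\right)=\Delta\left(\mathcal{P}_n\right)\left[\ZZ\left[t_n\right]:\ZZ\left[\mathfrak{j}_n\right]\right]^2,
\]
so $\Delta\left(\mathcal{P}_n\right)$ divides $\Delta\left(H_n\right)$ in $\ZZ$; in particular it suffices to prove that $3\nmid \Delta\left(H_n\right)$ for every squarefree positive integer $n\equiv 11\pmod{24}$.

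The next step is a purely congruence-theoretic observation: for such $n$, the rational prime $3$ splits in $K=\QQ\left(\sqrt{-n}\right)$. Indeed, $n\equiv 11\pmod{24}$ forces $n\equiv 2\pmod 3$, hence $-n\equiv 1\pmod 3$ and $\left(\tfrac{-n}{3}\right)=1$; moreover $3\nmid n$, so $3$ is unramified in $K$. Note also that, since $n$ is squarefree with $n\equiv 3\pmod 4$, the integer $-n$ is a fundamental discriminant, so Dorman's formula is in force and Ye's Corollary 1.2 applies verbatim.

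Finally, I would invoke Ye's Corollary 1.2, which like its Gross--Zagier and Dorman antecedents expresses the prime factorization of $\Delta\left(H_n\right)$ entirely as a product over rational primes $\ell$ that are \emph{non-split} (i.e.\ inert or ramified) in $K$. By the preceding paragraph, $3$ is split in $K$, so $3$ does not appear in Ye's product, giving $v_3\left(\Delta\left(H_n\right)\right)=0$ and hence $3\nmid\Delta\left(\mathcal{P}_n\right)$.

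The only real obstacle is bookkeeping: one must read off from Ye's formula that the prime $3$ genuinely falls outside the indexing set of the product (as opposed to contributing with exponent zero through some accidental cancellation), and that the formula indeed applies to all squarefree $n\equiv 11\pmod{24}$ rather than to some proper subclass. Both are transparent from the statement of Ye's Corollary 1.2 once it is specialized to the fundamental discriminant $-n$, so the argument collapses to the split-prime observation above.
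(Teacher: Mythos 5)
Your proposal is correct and follows essentially the same route as the paper: reduce to $\Delta\left(H_n\right)$ via Theorem \ref{thm2}, observe that $n\equiv 11\pmod{24}$ forces $-n\equiv 1\pmod 3$ and $3\nmid n$ so that $3$ splits in $\QQ\left(\sqrt{-n}\right)$, and conclude from Ye's Corollary 1.2 that only inert primes and primes dividing $n$ can contribute to the factorization. The only cosmetic difference is that you add the (correct but not strictly needed) remark that $-n$ is a fundamental discriminant.
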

\subsection{Organization of the paper} We start with a section on the necessary preliminaries and prove some important results that will later play a very crucial role in the proofs of our main results. Proofs of Theorem \ref{thm2}, \ref{thm1}, \ref{sign}, and \ref{3} are provided in Section  \ref{S3}, \ref{S4}, \ref{S5} and \ref{S6} respectively. In Table 2, we have computed the class number and the ideal class group structure of $\QQ(\sqrt{-n})$, the prime factorization and sign of $\Delta\left(\mathcal{P}_n\right)$ for all positive $n\equiv11\pmod{24}$ where $11\leqslant n \leqslant 995$ as an illustration of our main results.  Finally, in Section \ref{S7}, we show that all our main results hold true for $n=227$, as an example. All the computations were performed using Sage \cite{Sg}.
\section{Preliminaries}\label{S2}
\subsection{The $j$-invariant of a lattice}\label{sub}
A \emph{lattice} is defined to be an additive subgroup $L$ of $\CC$ which is generated by  two complex numbers $\omega_1$ and $\omega_2$ that are linearly independent over $\RR$. We express this by writing $L=[\omega_1,\omega_2].$ 
The $j$-invariant $j(L)$ of a lattice $L$ is defined to be the complex number 
\begin{equation*}
	j\left(L\right)=1728\dfrac{g_2\left(L\right)^3}{g_2\left(L\right)^3-27g_3\left(L\right)^2},
\end{equation*}
where
\begin{equation*}
	g_2\left(L\right)=60\sum_{\omega\in L\backslash\{0\} }^{}\dfrac{1}{\omega^4}=60\sum_{\substack{m,n=-\infty\\\left(m,n\right)\neq\left(0,0\right)}}^{\infty}\dfrac{1}{\left(m\tau+n\right)^{4}},
\end{equation*}
and
\begin{equation*}
	g_3\left(L\right)=140\sum_{\omega\in L\backslash\{0\} }^{}\dfrac{1}{\omega^6}=140\sum_{\substack{m,n=-\infty\\\left(m,n\right)\neq\left(0,0\right)}}^{\infty}\dfrac{1}{\left(m\tau+n\right)^{6}},
\end{equation*}
where $L=[1,\tau]$ with $\tau\in\HH$, the upper-half plane. 

\begin{prop}\label{CX1}
	Let $L$ be a lattice, and let $\overline{L}$ denote the lattice obtained by complex conjugation. Then $g_2(\overline{L}) = \overline{g_2(L)}, g_3(\overline{L}) = \overline{g_3(L)}$ and $j(\overline{L}) = \overline{j(L)}$. 	
\end{prop}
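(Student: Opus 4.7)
The plan is straightforward: the identities for $g_2$ and $g_3$ come directly from reindexing the defining sums by $\omega \mapsto \overline{\omega}$, and the identity for $j$ is then a formal consequence of its being a rational function with real (in fact integer) coefficients.

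First I would observe that if $L = [\omega_1, \omega_2]$, then $\overline{L} = \{\overline{\omega} : \omega \in L\}$ is the lattice $[\overline{\omega_1}, \overline{\omega_2}]$ ($\RR$-linear independence of $\omega_1, \omega_2$ transfers to their conjugates), and complex conjugation induces a bijection $L \setminus \{0\} \to \overline{L} \setminus \{0\}$. Reindexing the sum defining $g_2(\overline{L})$ through this bijection gives
\[
g_2(\overline{L}) = 60 \sum_{\omega' \in \overline{L} \setminus \{0\}} \frac{1}{\omega'^{4}} = 60 \sum_{\omega \in L \setminus \{0\}} \frac{1}{\overline{\omega}^{4}}.
\]
Because the Eisenstein-type series $\sum_{\omega \in L \setminus \{0\}} |\omega|^{-4}$ is absolutely convergent (the exponent $4$ exceeds $2$), I can pull complex conjugation outside the sum, obtaining $g_2(\overline{L}) = \overline{g_2(L)}$. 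The identical argument with the exponent $4$ replaced by $6$ yields $g_3(\overline{L}) = \overline{g_3(L)}$.

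For the $j$-invariant, recall that $j(L) = 1728\, g_2(L)^3 / (g_2(L)^3 - 27 g_3(L)^2)$ is a rational function of $g_2(L)$ and $g_3(L)$ whose coefficients are integers, hence fixed by complex conjugation. Substituting the two identities just proven and using that $\overline{\,\cdot\,}$ is a field automorphism of $\CC$ restricting to the identity on $\RR$, I obtain
\[
j(\overline{L}) = \frac{1728\, \overline{g_2(L)}^{\,3}}{\overline{g_2(L)}^{\,3} - 27\, \overline{g_3(L)}^{\,2}} = \overline{j(L)},
\]
where the denominator is nonzero because $L$ is a nondegenerate lattice, so $g_2(L)^3 - 27 g_3(L)^2 \neq 0$.

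There is no real obstacle in this argument: the sole technicality is the interchange of complex conjugation with the infinite sum, and this is immediate from the absolute convergence of Eisenstein series of weight greater than $2$. The proposition is essentially a formal consequence of the definitions together with the fact that $\overline{\,\cdot\,}$ is a continuous ring automorphism of $\CC$ fixing $\RR$.
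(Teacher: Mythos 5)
Your proposal is correct and follows essentially the same route as the paper: reindex the Eisenstein sums under $\omega \mapsto \overline{\omega}$, pull conjugation outside the sum, and then apply the rational expression for $j$ in terms of $g_2$ and $g_3$. The only difference is that you explicitly justify the interchange of conjugation with the infinite sum via absolute convergence and note the nonvanishing of $g_2(L)^3 - 27g_3(L)^2$, details the paper leaves implicit.
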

\begin{proof}
	From the definition of $g_2(L)$, we have
	\begin{align*}
		g_2(\overline{L})&=60\sum_{\omega\in\overline{L}\backslash\{0\}}^{}\dfrac{1}{\omega^4}
		=60\sum_{\omega\in L\backslash\{0\}}^{}\dfrac{1}{\overline{\omega^{4}}}
		=\overline{60\sum_{\omega\in L\backslash\{0\}}^{}\dfrac{1}{\omega^{4}}}
		=\overline{g_2(L)}.
	\end{align*}
A similar argument can be implemented to show that $g_3(\overline{L}) = \overline{g_3(L)}$. 

Finally, putting all things together produces
\[j\left(\overline{L}\right)=1728\dfrac{g_2\left(\overline{L}\right)^3}{g_2\left(\overline{L}\right)^3-27g_3\left(\overline{L}\right)^2}=1728\dfrac{\overline{g_2\left(L\right)^3}}{\overline{g_2\left(L\right)^3}-27\overline{g_3\left(L\right)^2}}=\overline{j(L)},\]
which is the desired result. 
\end{proof}
We say that two lattices $L$ and  $L'$ are \emph{homothetic} if there is a nonzero complex number $\lambda$ such that $L' = \lambda L$. Note that homothetic lattices have the same $j$-invariant.
\begin{lemma}[{\cite[Theorem 10.9]{Coz}}]\label{CX3}
	If $L$ and $L'$ are lattices in $\CC$, then $j(L) = j(L')$ if and only if both the lattices $L$ and $L'$ are homothetic.
\end{lemma}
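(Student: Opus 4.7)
The plan is to prove the two directions separately, with the ``only if'' direction reducing to the classical injectivity of the modular $j$-function on the standard fundamental domain for $\SL_2(\ZZ)$.

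For the easy ``if'' direction, suppose $L' = \lambda L$ for some $\lambda\in\CC^{\times}$. Directly from the defining series, I would verify the homogeneity relations $g_2(\lambda L) = \lambda^{-4} g_2(L)$ and $g_3(\lambda L) = \lambda^{-6} g_3(L)$. Substituting into the formula for the $j$-invariant, every factor of $\lambda$ cancels between numerator and denominator (the numerator scales by $\lambda^{-12}$, as does the denominator), giving $j(L') = j(L)$.

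For the ``only if'' direction, I would first reduce to a statement about points in the upper half-plane. Any lattice $L = [\omega_1,\omega_2]$ can be ordered so that $\tau \colonequals \omega_2/\omega_1 \in \HH$; then $L$ is homothetic (by $\omega_1^{-1}$) to $[1,\tau]$. Moreover, two normalized lattices $[1,\tau]$ and $[1,\tau']$ are homothetic if and only if $\tau' = \gamma\tau$ for some $\gamma = \left(\begin{smallmatrix} a & b \\ c & d\end{smallmatrix}\right)\in\SL_2(\ZZ)$, since any change-of-basis matrix between two $\ZZ$-bases of homothetic lattices lies in $\GL_2(\ZZ)$, and the condition $\tau,\tau'\in\HH$ forces determinant $+1$. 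So the task reduces to showing: if $j(\tau)=j(\tau')$ with $\tau,\tau'\in\HH$, then $\tau' = \gamma\tau$ for some $\gamma\in\SL_2(\ZZ)$.

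For this, I would invoke the standard fundamental domain
\[
\mcF = \left\{\tau\in\HH : |\tau|\geqslant 1,\ |\Re(\tau)|\leqslant \tfrac{1}{2}\right\},
\]
and use the fact that every $\SL_2(\ZZ)$-orbit on $\HH$ meets $\mcF$ and meets its interior in at most one point. It then suffices to show that $j\colon\mcF\to\CC$ is injective. The key input is the valence formula for weakly modular functions of weight $0$: for any $c\in\CC$, the function $j(\tau)-c$ has exactly one zero in $\mcF$ counted with the usual weights at the elliptic points $i$ (weight $1/2$) and $\rho = e^{2\pi i/3}$ (weight $1/3$) and at $\infty$, and in fact $j$ has a simple pole at $\infty$ and no other poles. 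Applying this to $c = j(\tau_0)$ for any $\tau_0\in\mcF$ shows that $\tau_0$ is the unique preimage of $j(\tau_0)$ in the fundamental domain, so $j(\tau) = j(\tau')$ forces $\tau$ and $\tau'$ to be $\SL_2(\ZZ)$-equivalent, completing the proof.

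The main obstacle is the injectivity of $j$ on $\mcF$, which ultimately rests on the valence formula for modular forms; the rest is bookkeeping about normalizing lattice bases and the homogeneity of $g_2, g_3$. Since this lemma is a classical fact (e.g.\ \cite[Theorem 10.9]{Coz}), I would cite the valence formula and the structure of the fundamental domain rather than reprove them from scratch.
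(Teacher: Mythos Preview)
Your proof sketch is correct and follows the standard route: homogeneity of $g_2,g_3$ for the ``if'' direction, and normalization to $[1,\tau]$ together with injectivity of $j$ on the fundamental domain (via the valence formula) for the ``only if'' direction. Note, however, that the paper does not actually prove this lemma at all --- it is stated with a direct citation to \cite[Theorem~10.9]{Coz} and used as a black box. So there is nothing in the paper to compare against; you have supplied more than the paper does, and what you have supplied is essentially the argument one finds in Cox.
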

Next, we prove an important result that will play a crucial role in the proof of Theorem \ref{sign}.
\begin{prop}\label{CX2}
	Let $\mathfrak{a}$ be a proper fractional $\mathcal{O}$-ideal, where $\mathcal{O}$ is an order in an imaginary quadratic number field. Then $j(\mathfrak{a})$ is a real number if and only if the class of $\mathfrak{a}$ has order at most 2 in the ideal class group $\Cl\left(\mathcal{O}\right)$.
\end{prop}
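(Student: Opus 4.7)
The plan is to translate the condition $j(\mathfrak{a})\in\RR$ through Proposition~\ref{CX1} and Lemma~\ref{CX3} into a statement about $\mathfrak{a}$ and $\overline{\mathfrak{a}}$ in the class group. First, $j(\mathfrak{a})\in\RR$ is equivalent to $j(\mathfrak{a})=\overline{j(\mathfrak{a})}$, which by Proposition~\ref{CX1} equals $j(\overline{\mathfrak{a}})$; Lemma~\ref{CX3} then reformulates this as the existence of $\lambda\in\CC^{\times}$ with $\overline{\mathfrak{a}}=\lambda\mathfrak{a}$, i.e.\ $\mathfrak{a}$ and $\overline{\mathfrak{a}}$ are homothetic as lattices in $\CC$.

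Next, I would upgrade this lattice-theoretic homothety to an equivalence of proper fractional $\mathcal{O}$-ideals by showing the scalar $\lambda$ must lie in $K^{\times}$. Choosing any nonzero $\alpha\in\mathfrak{a}$, the element $\overline{\alpha}$ belongs to $\overline{\mathfrak{a}}=\lambda\mathfrak{a}$, so we may write $\overline{\alpha}=\lambda\beta$ for some $\beta\in\mathfrak{a}\subset K$, whence $\lambda=\overline{\alpha}/\beta\in K^{\times}$. The converse is immediate, so the condition $j(\mathfrak{a})\in\RR$ is equivalent to $[\overline{\mathfrak{a}}]=[\mathfrak{a}]$ in the class group $\Cl(\mathcal{O})$.

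Finally, because $\mathfrak{a}$ is a proper and hence invertible $\mathcal{O}$-ideal, I would invoke the standard identity $\mathfrak{a}\,\overline{\mathfrak{a}}=N(\mathfrak{a})\mathcal{O}$ in an imaginary quadratic order, which gives $[\overline{\mathfrak{a}}]=[\mathfrak{a}]^{-1}$ in $\Cl(\mathcal{O})$. Combining this with the previous step yields $[\mathfrak{a}]=[\mathfrak{a}]^{-1}$, i.e.\ $[\mathfrak{a}]^{2}=1$, which is precisely the assertion that the class of $\mathfrak{a}$ has order dividing $2$.

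The only delicate point is the identity $\mathfrak{a}\,\overline{\mathfrak{a}}=N(\mathfrak{a})\mathcal{O}$ when $\mathcal{O}$ is non-maximal: it genuinely requires properness (equivalently invertibility) of $\mathfrak{a}$, which we are given, and I would cite it from the standard treatment of orders in imaginary quadratic fields (e.g.\ Cox). Every other step is a short chain of equivalences already supplied by Proposition~\ref{CX1} and Lemma~\ref{CX3}, so no computational obstacles should arise.
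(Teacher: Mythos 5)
Your proof is correct and follows essentially the same route as the paper: reduce realness of $j(\mathfrak{a})$ to homothety of $\mathfrak{a}$ and $\overline{\mathfrak{a}}$ via Proposition~\ref{CX1} and Lemma~\ref{CX3}, then use $\mathfrak{a}\,\overline{\mathfrak{a}}=N(\mathfrak{a})\mathcal{O}$ to identify $[\overline{\mathfrak{a}}]$ with $[\mathfrak{a}]^{-1}$. You are in fact more careful than the paper on two points it glosses over: checking that the homothety scalar lies in $K^{\times}$ (so that lattice homothety really yields equality of ideal classes), and invoking the norm identity explicitly with the invertibility hypothesis it requires.
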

\begin{proof}
	From Proposition \ref{CX1}, we know that $j(\mathfrak{a})$ is a real number if and only if $j(\mathfrak{a})=j(\overline{\mathfrak{a}})$. Now Lemma \ref{CX3} tells us that this is  only possible when $\mathfrak{a}$ and $\overline{\mathfrak{a}}$ are homothetic. Or equivalently, when they represent the same ideal in the ideal class group $\Cl\left(\mathcal{O}\right)$. By computing $\mathfrak{a}\overline{\mathfrak{a}}$ and comparing the $j$-invariants by a similar argument that we just sketched above, it is easy to see that 
	\[\mathfrak{a}=\mathfrak{a}^{-1}=\overline{\mathfrak{a}}.\]
	Note that this is not true as a statement about ideals, here we are explicitly referring to ideal classes, that is, elements of $\Cl\left(\mathcal{O}\right)$. Putting all things together gives us the desired result. 
\end{proof}
As a consequence of the above Proposition, we have the following important corollary:
\begin{cor}\label{J}
	The $j$-invariant $j(\mathcal{O})$ is a real number for any order $\mathcal{O}$.
\end{cor}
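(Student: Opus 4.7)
The plan is to deduce Corollary \ref{J} as an immediate specialization of Proposition \ref{CX2}. I would apply the proposition with $\mathfrak{a} = \mathcal{O}$, viewing the order itself as a fractional $\mathcal{O}$-ideal, and then observe that the corresponding ideal class is the trivial one.

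More concretely, first I would note that $\mathcal{O}$ is a proper fractional $\mathcal{O}$-ideal in the sense relevant for Proposition \ref{CX2}: its multiplier ring $\{\lambda \in K : \lambda \mathcal{O} \subseteq \mathcal{O}\}$ is equal to $\mathcal{O}$ itself. This is essentially immediate from the fact that $1 \in \mathcal{O}$, so any $\lambda$ stabilizing $\mathcal{O}$ satisfies $\lambda = \lambda \cdot 1 \in \mathcal{O}$, while the reverse inclusion is trivial. Hence $\mathcal{O}$ qualifies as an input to Proposition \ref{CX2}.

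Next I would identify the class of $\mathcal{O}$ in $\Cl(\mathcal{O})$: it is the identity element, since $\mathcal{O}$ is a principal ideal (generated by $1$). In particular its order in $\Cl(\mathcal{O})$ is $1$, which is certainly at most $2$. Applying the ``if'' direction of Proposition \ref{CX2}, we conclude that $j(\mathcal{O})$ is a real number, as desired.

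There is essentially no obstacle here; the work has already been done in Proposition \ref{CX2}, and the corollary is a one-line application. The only subtlety worth spelling out is the verification that $\mathcal{O}$ itself counts as a proper fractional $\mathcal{O}$-ideal, which is why I would include that sentence explicitly rather than leave it implicit.
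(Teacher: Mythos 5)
Your proof is correct and matches the paper's intent exactly: the corollary is stated there as an immediate consequence of Proposition \ref{CX2}, applied to the trivial class represented by $\mathcal{O}$ itself. Your explicit check that $\mathcal{O}$ is a proper fractional $\mathcal{O}$-ideal is a reasonable bit of added care, but the argument is the same one the paper leaves implicit.
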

\subsection{Discriminant of an algebraic number field}
Let $\mathcal{D}\left(K\right)$ denote the discriminant of an algebraic number field $K$. The discriminant of a nonzero finitely generated $\ZZ$-submodule $\mathfrak{a}$ of a number field $K$ is defined as
\begin{equation*}
	\mathcal{D}\left(\mathfrak{a}\right)=\mathcal{D}\left(\alpha_1,\alpha_2,\ldots, \alpha_n\right),
\end{equation*}
where $\mathfrak{a}=\ZZ\alpha_1+\ZZ\alpha_2+\cdots+\ZZ\alpha_n$ and spans $K$ as a $\QQ$-vector space.
\begin{prop}[{\cite[Prop. 2.12]{Neukirch}}]
	\label{index}
	If $\mathfrak{a} \subset \mathfrak{a}'$ are two nonzero finitely generated $\ZZ$-submodules of a number field $K$ that span $K$ as a $\QQ$-vector space, then the index $[\mathfrak{a}' : \mathfrak{a}]$ is finite and 
	\begin{equation*}
		\mathcal{D}\left(\mathfrak{a}\right)=\left[\mathfrak{a}':\mathfrak{a}\right]^2 \mathcal{D}\left(\mathfrak{a}'\right).
	\end{equation*}
\end{prop}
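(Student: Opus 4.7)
The plan is to reduce the statement to a straightforward change-of-basis computation on the trace form. First I would verify the structural input: since $\mathfrak{a}'$ is a finitely generated torsion-free $\ZZ$-module spanning $K$ as a $\QQ$-vector space, it is free of rank $n \colonequals [K:\QQ]$, and the same holds for $\mathfrak{a}$. Because both have rank $n$ as free $\ZZ$-modules and $\mathfrak{a} \subset \mathfrak{a}'$, the quotient $\mathfrak{a}'/\mathfrak{a}$ is a finitely generated torsion abelian group, hence finite, which already gives $[\mathfrak{a}':\mathfrak{a}]<\infty$.

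Next I would fix a $\ZZ$-basis $\alpha_1',\ldots,\alpha_n'$ of $\mathfrak{a}'$ and invoke the elementary divisor theorem (equivalently, Smith normal form) applied to the inclusion $\mathfrak{a} \hookrightarrow \mathfrak{a}'$ to obtain a $\ZZ$-basis $\alpha_1,\ldots,\alpha_n$ of $\mathfrak{a}$ of the form $\alpha_i = \sum_{j} M_{ij}\,\alpha_j'$, for some matrix $M \in M_n(\ZZ)$. From the elementary divisor presentation one reads off the standard identity
\[
\bigl[\mathfrak{a}':\mathfrak{a}\bigr] \;=\; \bigl|\det M\bigr|,
\]
which takes care of the index factor on the nose.

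Then I would compare the discriminant matrices in the two bases. Using the definition $\mathcal{D}(\mathfrak{a})=\det\bigl(\Tr_{K/\QQ}(\alpha_i\alpha_j)\bigr)$ and the bilinearity of the trace form, the change-of-basis identity gives
\[
\bigl(\Tr(\alpha_i\alpha_j)\bigr)_{i,j} \;=\; M\,\bigl(\Tr(\alpha_k'\alpha_\ell')\bigr)_{k,\ell}\,M^{T}.
\]
Taking determinants and combining with the index identity yields
\[
\mathcal{D}(\mathfrak{a}) \;=\; (\det M)^{2}\,\mathcal{D}(\mathfrak{a}') \;=\; \bigl[\mathfrak{a}':\mathfrak{a}\bigr]^{2}\mathcal{D}(\mathfrak{a}'),
\]
which is exactly the claim.

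The argument is essentially linear algebra once the rank statement is in hand, so there is no real obstacle; the only point requiring care is ensuring that the chosen $\ZZ$-bases of $\mathfrak{a}$ and $\mathfrak{a}'$ are also $\QQ$-bases of $K$ (so that the trace form is computed consistently in both cases). This is automatic from the hypothesis that both submodules span $K$ over $\QQ$, so the proof reduces to the clean computation above.
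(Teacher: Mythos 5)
Your argument is correct and is essentially the standard proof: the paper itself gives no proof of Proposition \ref{index}, citing Neukirch instead, and your route (free rank $n$ from torsion-freeness and spanning, Smith normal form to get $[\mathfrak{a}':\mathfrak{a}]=|\det M|$, then the Gram-matrix identity $\bigl(\Tr(\alpha_i\alpha_j)\bigr)=M\bigl(\Tr(\alpha_k'\alpha_\ell')\bigr)M^{T}$) is exactly the computation behind the cited result. It also substantiates the paper's remark that the statement holds for arbitrary finitely generated $\ZZ$-submodules, since nothing in your argument uses an $\mathcal{O}_K$-module structure.
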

\begin{rem}
	Note that although Neukirch \cite[Prop. 2.12]{Neukirch} only states that Proposition \ref{index} is true for finitely generated $\mathcal{O}_K$-submodules, the same result and proof also works in the more general case, that is, for finitely generated $\ZZ$-submodules.
\end{rem}
\subsection{Number fields $\QQ\left(\mathfrak{j}_n\right)$ and  $\QQ\left(t_n\right)$}
The main goal of this subsection is to show that $\QQ\left(\mathfrak{j}_n\right)$ equals $\QQ\left(t_n\right)$. This important result indeed proves Theorem \ref{thm1} in a very simple and efficient way as explained later in Section \ref{S4}.
\begin{prop}\label{imp}
	For all positive integers $n\equiv11\pmod{24}$, we have $\QQ\left(\mathfrak{j}_n\right)=\QQ\left(t_n\right)$. 
\end{prop}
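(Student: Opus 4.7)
The plan is to exhibit $\QQ(\mathfrak{j}_n) \subseteq \QQ(t_n)$ via the explicit formula of Proposition \ref{ROOT}, and then close the inclusion using a degree count, since both extensions turn out to have degree $h_n$ over $\QQ$.

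For the first inclusion, I would invoke Proposition \ref{ROOT}, which yields the identity
\[
\mathfrak{j}_n = \left(t_n^6 - 27\, t_n^{-6} - 6\right)^3.
\]
Since $q_n = \exp(-\pi\sqrt{n}) \in (0,1)$ and the eta-like product defining $t_n$ is a strictly positive real number, $t_n \neq 0$, so the right-hand side is a well-defined element of $\QQ(t_n)$. This gives $\mathfrak{j}_n \in \QQ(t_n)$, hence $\QQ(\mathfrak{j}_n) \subseteq \QQ(t_n)$.

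For the reverse inclusion, I would argue by degree. The minimal polynomial of $\mathfrak{j}_n$ over $\QQ$ is the Hilbert class polynomial $H_n$, whose degree is the class number $h_n$ of the order of discriminant $-n$; this is the classical CM fact recalled in the introduction. On the other hand, the minimal polynomial of $t_n$ over $\QQ$ is $\mathcal{P}_n$, and the paper records (following Konstantinou and Kontogeorgis \cite{KK1}, who showed that $t_n$ is a class invariant for $n \equiv 11 \pmod{24}$) that $\deg \mathcal{P}_n = h_n$ as well. Therefore
\[
\bigl[\QQ(\mathfrak{j}_n):\QQ\bigr] \;=\; h_n \;=\; \bigl[\QQ(t_n):\QQ\bigr],
\]
and combined with $\QQ(\mathfrak{j}_n) \subseteq \QQ(t_n)$ this forces equality.

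There is really no substantive obstacle: the argument amounts to observing that Proposition \ref{ROOT} places $\mathfrak{j}_n$ inside $\QQ(t_n)$ and that the two minimal polynomials $H_n$ and $\mathcal{P}_n$ have the same degree $h_n$. The only thing worth flagging is that Proposition \ref{ROOT} is stated for \emph{real} roots $R_{\mathcal{P}}$ of $\mathcal{P}_n$; but the Ramanujan value $t_n$ itself is manifestly a positive real number and, by Theorem \ref{BB} (and its generalization in \cite{KK1}), it is indeed a root of $\mathcal{P}_n$, so this hypothesis is satisfied with $R_{\mathcal{P}} = t_n$.
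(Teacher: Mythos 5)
Your proof is correct, and its skeleton --- the inclusion $\QQ\left(\mathfrak{j}_n\right)\subseteq\QQ\left(t_n\right)$ extracted from Proposition \ref{ROOT}, followed by a degree count --- is the same as the paper's. The difference lies in how the two degrees over $\QQ$ are justified. You simply cite that $H_n$ and $\mathcal{P}_n$ are the minimal polynomials of $\mathfrak{j}_n$ and $t_n$ over $\QQ$, each of degree $h_n$; for $H_n$ this uses that irreducibility over $K$ implies irreducibility over $\QQ$, and for $\mathcal{P}_n$ it uses the fact recorded in the introduction (from Konstantinou--Kontogeorgis) that $\deg\mathcal{P}_n=h_n$. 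The paper instead re-derives both degrees from the ring class field tower $\QQ\subset K\subset H$: since $H=\left(\QQ\left(\mathfrak{j}_n\right)\right)\left(\sqrt{-n}\right)$, one gets $\left[\QQ\left(\mathfrak{j}_n\right):\QQ\right]\in\{h_n,2h_n\}$, and the reality of $\mathfrak{j}_n$ (Corollary \ref{J}) forces its minimal polynomial over $K$ to have rational coefficients, pinning the degree at $h_n$; the same argument is then run for $t_n$ using $t_n\in\RR$. Your route is shorter precisely because it outsources this reality argument to the cited results, whereas the paper's route is more self-contained and additionally produces the full field diagram with $\left[H:\QQ\left(\mathfrak{j}_n\right)\right]=\left[H:\QQ\left(t_n\right)\right]=2$ (extra information the paper does not actually need elsewhere). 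In both versions the final step is the tower law forcing $\left[\QQ\left(t_n\right):\QQ\left(\mathfrak{j}_n\right)\right]=1$. Your closing remark about Proposition \ref{ROOT} applying because $t_n$ is a positive real root of $\mathcal{P}_n$ is a legitimate and worthwhile check, consistent with how the paper invokes that proposition.
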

\begin{proof}
   To start with, consider the following field diagram:
	\begin{figure}[h]
		\label{Fig1}
		\centering
		\begin{tikzcd}
			&  & H                                                      &  &                                                             &  &                                                                            \\
			&  &                                                        &  &                                                             &  &                                                                            \\
			K=\mathbb{Q}\left(\sqrt{-n}\right) \arrow[rruu, no head, "h_n"] \arrow[rrdd, no head, "2"] &  &                                                        &  & \mathbb{Q}\left(\mathfrak{j}_n\right) \arrow[lluu, no head] &  & \mathbb{Q}\left(t_n\right) \arrow[lllluu, no head] \arrow[lllldd, no head] \\
			&  &                                                        &  &                                                             &  &                                                                            \\
			&  & \mathbb{Q} \arrow[rruu, no head] \arrow[uuuu, no head] &  &                                                             &  &                                                                           
		\end{tikzcd}
		\caption{}
	\end{figure}

We freely use some standard facts about the ring class fields of $K$ throughout the proof. 

From class field theory, recall that if $H$ is the ring class field associated to the discriminant of order $-n$, then $H$ is a finite Galois extension of $K$ and $[H : K] = h_n$, where $h_n$ denotes the class number of the order of discriminant $-n$. 
Therefore, we have $[H:K]=[H:\QQ(\sqrt{-n})]=h_n$. Next, it is easy to see that $[K:\QQ]=2$ since $\{1,\imath\sqrt{n}\}$ forms a basis. Moreover, since $H=\left(\QQ\left(\mathfrak{j}_n\right)\right)(\sqrt{-n}),$ we have $\left[H : \QQ\left(\mathfrak{j}_n\right)\right] \leqslant 2$. Theorefore, using the tower law, we deduce that either $\left[\QQ\left(\mathfrak{j}_n\right):\QQ\right]=h_n$ or $\left[\QQ\left(\mathfrak{j}_n\right):\QQ\right]=2h_n$. Now let $\mathfrak{f}$ be the minimal polynomial of $\mathfrak{j}_n \in H$ over $K$. Note that $\mathfrak{f}\in K[x]$ has degree $h_n$. From Corollary \ref{J}, we know that $\mathfrak{j}_n \in \RR$, that is $\overline{\mathfrak{j}_n}=\mathfrak{j}_n$, thus we have $\overline{\mathfrak{f}}=\mathfrak{f}$, which implies $\mathfrak{f}\in\QQ[x]$, and therefore, we deduce that $[\QQ\left(\mathfrak{j}_n\right):\QQ]\leqslant h_n$. 

Putting all things together finally produces $\left[\QQ\left(\mathfrak{j}_n\right):\QQ\right]=h_n$ and $\left[H:\QQ\left(\mathfrak{j}_n\right)\right]=2.$ Moreover, since $t_n \in \RR$, which easily follows from the Ramanujan's definition of $t_n$, a similar argument can be applied to show that $\left[\QQ\left(t_n\right):\QQ\right]=h_n$ and $\left[H:\QQ\left(t_n\right)\right]=2.$ 

All this discussion can be summarised by the following field diagram:
\begin{figure}[h]
	\centering
	\begin{tikzcd}
		&  & H                                                      &  &                                                             &  &                                                                            \\
		&  &                                                        &  &                                                             &  &                                                                            \\
		K=\mathbb{Q}\left(\sqrt{-n}\right) \arrow[rruu, no head, "h_n"] \arrow[rrdd, no head, "2"] &  &                                                        &  & \mathbb{Q}\left(\mathfrak{j}_n\right) \arrow[lluu, no head, "2"] &  & \mathbb{Q}\left(t_n\right) \arrow[lllluu, no head, "2"] \arrow[lllldd, no head, "h_n"] \\
		&  &                                                        &  &                                                             &  &                                                                            \\
		&  & \mathbb{Q} \arrow[rruu, no head, "h_n"] \arrow[uuuu, no head] &  &                                                             &  &                                                                           
	\end{tikzcd}
	\caption{}
\end{figure}

Now observe that we have the tower of fields $\QQ\subseteq\QQ\left(\mathfrak{j}_n\right)\subseteq\QQ\left(t_n\right)\subseteq H$, where  $\QQ\left(\mathfrak{j}_n\right)\subseteq\QQ\left(t_n\right)$ follows from Proposition \ref{ROOT}. Therefore, by the tower law, we have
\[\left[\QQ\left(t_n\right):\QQ\left(\mathfrak{j}_n\right)\right]\left[H:\QQ\left(t_n\right)\right]=\left[H:\QQ\left(\mathfrak{j}_n\right)\right] \implies \left[\QQ\left(t_n\right):\QQ\left(\mathfrak{j}_n\right)\right]=1,\]
which is only possible when $\QQ\left(\mathfrak{j}_n\right) =\QQ\left(t_n\right)$. This completes the proof. 
\end{proof}
\section{Proof of Theorem \ref{thm2}}\label{S3}
Since $t_n \in \ZZ\left[t_n\right]$ and the constant term of $\mathcal{P}_n$ is always $\pm1$, which follows from \cite[Section 3]{KKP2} where the authors state that $t_n$ is a unit, it follows that $t_n^{-1}\in\ZZ\left[t_n\right]$. From Proposition \ref{ROOT}, we have
\begin{equation*}
	\mathfrak{j}_n=\left(t_n^6-27t_n^{-6}-6\right)^3.
\end{equation*}
Therefore, we deduce that $\mathfrak{j}_n\in\ZZ\left[t_n\right]$ and thus  $
	\ZZ\left[\mathfrak{j}_n\right] \subseteq \ZZ\left[t_n\right]$. 
	Now plugging in $\mathfrak{a}=\ZZ\left[\mathfrak{j}_n\right]$ and $\mathfrak{a}'=\ZZ\left[t_n\right]$ in Proposition \ref{index} produces 
\[\mathcal{D}\left(\ZZ\left[\mathfrak{j}_n\right]\right)=\mathcal{D}\left(\ZZ\left[t_n\right]\right)\left[\ZZ\left[t_n\right] : \ZZ\left[\mathfrak{j}_n\right]\right]^2,\]
or equivalently,
\begin{equation*}
	\Delta\left(H_n\right)=\Delta\left(\mathcal{P}_n\right)\left[\ZZ\left[t_n\right]:\ZZ\left[\mathfrak{j}_n\right]\right]^2,
\end{equation*}
which is the desired result. \QED
\section{Proof of Theorem \ref{thm1}}\label{S4}
Simply notice that $\ZZ\left[t_n\right]\subseteq\mathcal{O}_{\QQ\left(t_n\right)}=\mathcal{O}_{\QQ\left(\mathfrak{j}_n\right)}\subseteq\QQ\left(\mathfrak{j}_n\right)$, where $\mathcal{O}_{\QQ\left(t_n\right)}=\mathcal{O}_{\QQ\left(\mathfrak{j}_n\right)}$ follows from the fact that $\QQ\left(\mathfrak{j}_n\right)=\QQ\left(t_n\right)$, as shown in Proposition \ref{imp}. Now, as we did earlier, plugging in $\mathfrak{a}=\ZZ\left[t_n\right]$ and $\mathfrak{a}'=\mathcal{O}_{\QQ\left(\mathfrak{j}_n\right)}$ in Proposition \ref{index} gives us the desired result. \QED 
\section{Proof of Theorem \ref{sign}}\label{S5}
From the definition of Hilbert class polynomial, recall that its roots are given by
\begin{equation*}
	\{j(\mathfrak{a}) \mid \mathfrak{a}\in\Cl\left(\mathcal{O}\right)\}.
\end{equation*}
From Proposition \ref{CX2}, we deduce that the real roots of Hilbert class polynomial $H_n$ are given by
\begin{equation*}
	\{j(\mathfrak{a}) \mid \mathfrak{a}\in\Cl\left(\mathcal{O}\right), \textrm{order of } \mathfrak{a} \textrm{ at most } 2\}=\Cl\left(n\right)[2].	
\end{equation*}
Therefore, the number of non-real roots of $H_n$ is given by  $\left|\Cl\left(n\right)\right|-\left|\Cl\left(n\right)\left[2\right]\right|=h_n-\left|\Cl\left(n\right)\left[2\right]\right|$.  Recall that the discriminant of a polynomial in $\QQ$ is positive if and only if the number of non-real roots of the polynomial is divisible by 4. Putting all things together gives us the desired result. \QED
\section{Proof of Theorem \ref{3}}\label{S6}
Assume that $n$ is a positive squarefree integer. Ye  \cite[Corollary 1.2]{Ye} very recently explicitly computed the discriminant of Hilbert class polynomials $H_n$ as
\begin{align*}
	&\log\left|\Delta\left(H_n\right)\right|\\&=-\dfrac{h_K}{4}\sum_{\substack{[\mathfrak{a}]\in\Cl_K\\ [\mathfrak{a}]\neq[\mathcal{O}_K]}}\sum_{\ell=0}^{n-1}\sum_{X,Y=-\infty}^{\infty}\kappa\left(1-\dfrac{n\left(2AX+BY\right)^2+\left(nY-2A\ell\right)^2}{4An},\dfrac{B\ell}{n}f_1^{(\mathfrak{a})}-\dfrac{2A\ell}{n}f_{2}^{(\mathfrak{a})}+L_{-}^{(\mathfrak{a})}\right)
\end{align*}
where $\mathfrak{a}=\left[A,\dfrac{B+\sqrt{-n}}{2}\right], f_1^{(\mathfrak{a})}=\begin{pmatrix}
	-1 & B \\ 0 & A
\end{pmatrix}, f_2^{(\mathfrak{a})}=\begin{pmatrix}
0 & C \\ 1 & 0
\end{pmatrix}, h_K$ is the class number of $K=\QQ\left(\sqrt{-n}\right)$, $L_{-}^{(\mathfrak{a})}$ denotes the lattice $\ZZ f_1^{(\mathfrak{a})}+\ZZ f_2^{(\mathfrak{a})}$, and 
\begin{align}\label{3(1)}
	&\kappa\left(m,\mu\right)\nonumber\\&=-\dfrac{1}{h_K}\left(\sum_{q \, \textrm{inert} }^{}\xi_q\left(m,\mu\right)\left(\ord_q(m)+1\right)\rho_K\left(mn/q\right)\log q + \rho_K\left(mn\right)\sum_{q \mid n}^{}\xi_q\left(m,\mu\right)\ord_q(mn)\log q\right) \nonumber
\end{align}
where $\xi_q$ and $\rho_K$ are  as defined in \cite[Corollary 1.2]{Ye}.

To prove Theorem \ref{3}, we will show that $\log3$ never appears  in $\kappa\left(m,\mu\right)$ for any choice of parameters $m$ and $\mu$.
From the definition of $\kappa\left(m,\mu\right)$, it suffices to prove that 3 always splits in $\QQ\left(\sqrt{-n}\right)$ and does not divide $n$ for all positive squarefree $n\equiv11\pmod{24}$.

Recall that a prime number $\mathfrak{p}$ splits in the imaginary quadratic  field $\QQ\left(\sqrt{-n}\right)$ if and only if $-n$ is a nonzero quadratic residue mod $\mathfrak{p}$. The quadratic residues mod 3 are 0 and 1, and it is easy to see that when $n\equiv11\pmod{24}$, we have $-n\equiv1\pmod{3}$. Thus $-n$ is a nonzero quadratic residue mod 3 for all positive squarefree integers $n\equiv11\pmod{24}$. This completes the proof. \QED
\section{An Exmaple of Main Results}\label{S7}
\begin{ex}
For $n=227$, we have \cite[Table 1]{KK1} $$\mathcal{P}_{227}(z)=z^5-5z^4+9z^3-9z^2+9z-1.$$
Now since 227 is squarefree, it follows that the degree of $\mathcal{P}_{227}$ is equal to the class number of $\QQ\left(\sqrt{-227}\right)$, and thus, $h_{227}=5$. The class group structure of $\QQ\left(\sqrt{-227}\right)$ is computed using Sage as $\ZZ/5\ZZ$, see Table 2. Now using Corollary \ref{signcor}, we have $\Delta\left(\mathcal{P}_{227}\right)>0$, since $$h_{227}=5\equiv1\pmod{4}.$$ 
From Table 2, we have $\Delta\left(\mathcal{P}_{227}\right)=2^4\cdot227^2$. 
Moreover, using the Sage database for Hilbert class polynomials and their discriminants \cite{Sg}, it can be  found that 
\begin{align*}H_{227}(z)=& \,z^5 + 360082897644683264000z^4 \\&- 2562327002832961536000000000z^3 \\&+ 18227340807938993794580480000000000z^2 \\&- 2111118203460821622718464000000000000z \\&+ 5085472193216544027705344000000000000000,
\end{align*}
and 
\[\Delta\left(H_{227}\right)=2^{316} \cdot 5^{60} \cdot 13^{20} \cdot 17^{20} \cdot 31^4 \cdot 37^6 \cdot 41^4 \cdot 61^4 \cdot 83^2 \cdot 151^2 \cdot 179^2 \cdot 191^2 \cdot 199^2 \cdot 227^2.\]
Therefore, we have 
\begin{align*}
	\dfrac{\Delta\left(H_{227}\right)}{\Delta\left(\mathcal{P}_{227}\right)}&=\left(2^{156} \cdot 5^{30} \cdot 13^{10} \cdot 17^{10} \cdot 31^2 \cdot 37^3 \cdot 41^2 \cdot 61^2 \cdot 83 \cdot 151 \cdot 179 \cdot 191 \cdot 199\right)^2,
\end{align*}
which verifies Theorem \ref{thm2}, since the quotient is also a perfect square. 

Now using Remark \ref{rem}, we have $D_0=1$, $D_1=227$, $\mathfrak{t}=1$, and thus
$$\mathcal{D}\left(\QQ\left(\mathfrak{j}_{227}\right)\right)=227^2 \mid 2^4\cdot227^2=\Delta\left(\mathcal{P}_{227}\right).$$ 
Moreover, notice that 3 does not appear in the prime factorization of $\Delta\left(\mathcal{P}_{227}\right)$ and $\Delta\left(H_{227}\right)$.
\end{ex}
\newpage
\FloatBarrier
\begin{table}[H]
	\centering
	\label{apx}
	\begin{tabular}{|l|l|l|l|l|}
		\hline \vspace{0.01in} 
		$n$   & $h_n$ & $\sgn(\Delta)$ & Prime factorisation of $\left|\Delta\left(\mathcal{P}_n\right)\right|$ & $\Cl\left(n\right)$  
		\vspace{0.01in}           \\ \hline
		$11$ & $1$ & $+$ & $1$ & $\ZZ/1\ZZ$ 
		\\
		$35$ & $2$ & $+$ & $5$  & $\ZZ/2\ZZ$ 
		\\
		$59$  & $3$    & $-$   & $59$                                                                                                      & $\ZZ/3\ZZ$   
		\\
		$83$  & $3$    & $-$   & $83$                                                                                                      & $\ZZ/3\ZZ$   
		\\
		$107$ & $3$    & $-$   & $107$                                                                                                    & $\ZZ/3\ZZ$   
		\\
		$131$ & $5$    & $+$   & $2^4 \times 131^2$                                                                                       & $\ZZ/5\ZZ$    
		\\
		$155$ & $4$    & $-$   & $2^4\times5^2\times31$                                                                                     & $\ZZ/4\ZZ$   
		\\
		$179$ & $5$    & $+$   & $2^6\times179^2$                                                                                          & $\ZZ/5\ZZ$   
		\\
		$203$ & $4$    & $-$   & $2^2\times7\times29^2$                                                                                   & $\ZZ/4\ZZ$    
		\\
		$227$ & $5$    & $+$   & $2^4\times227^2$                                                                                        & $\ZZ/5\ZZ$   
		\\
		$251$ & $7$    & $-$   & $2^{10}\times251^3$                                                                                       & $\ZZ/7\ZZ$    
		\\
		${275}$ & $4$    & $-$   & $2^2\times5^3\times7^2\times11$                                                                           & $\ZZ/4\ZZ$              
		\\
		$299$ & $8$    & $-$   & $2^{14}\times13^4\times23^3\times47^2$                                                                    & $\ZZ/8\ZZ$    
		\\
		$323$ & $4$    & $-$   & $2^2\times7^2\times17^2\times 19$                                                                        & $\ZZ/4\ZZ$    
		\\
		$347$ & $5$    & $+$   & $2^4\times347^2$                                                                                         & $\ZZ/5\ZZ$    
		\\
		$371$ & $8$    & $-$   & $2^{20}\times7^3\times17^2\times53^4$                                                                    & $\ZZ/8\ZZ$   
		\\
		$395$ & $8$    & $-$   & $2^{18}\times5^4\times13^4\times79^3$                                                                   & $\ZZ/8\ZZ$    
		\\
		$419$ & $9$    & $+$   & $2^{22}\times167^2\times 419^4$                                                                         & $\ZZ/9\ZZ$   
		\\
		$443$ & $5$    & $+$   & $2^6\times7^2\times443^2$                                                                                 & $\ZZ/5\ZZ$    
		\\
		$467$ & $7$    & $-$   & $2^{12}\times5^4\times467^3$                                                                             & $\ZZ/7\ZZ$    
		\\
		$491$ & $9$    & $+$   & $2^{28}\times7^2\times23^2\times491^4$                                                                   & $\ZZ/9\ZZ$    
		\\
		$515$ & $6$    & $+$   & $2^6\times5^3\times7^2\times13^2\times103^2$                                                            & $\ZZ/6\ZZ$    
		\\
		${539}$ & $8$    & $-$   & $2^{28}\times7^7\times11^4\times13^2$                                                                     & $\ZZ/8\ZZ$               
		\\
		$563$ & $9$    & $+$   & $2^{18}\times5^4\times311^2\times563^4$                                                                   & $\ZZ/9\ZZ$   
		\\
		$587$ & $7$    & $-$   & $2^{10}\times5^4\times13^4\times587^3$                                                                 & $\ZZ/7\ZZ$    
		\\
		$611$ & $10$   & $+$   & $2^{30}\times7^2\times13^9\times47^4$                                                                   & $\ZZ/{10}\ZZ$ 
		\\
		$635$ & $10$   & $+$   & $2^{28}\times5^5\times13^2\times127^4\times383^2$                                                        & $\ZZ/{10}\ZZ$ 
		\\
		$659$ & $11$   & $-$   & $2^{40}\times7^4\times191^2\times659^5$                                                                & $\ZZ/{11}\ZZ$ 
		\\
		$683$ & $5$    & $+$   & $2^6\times 7^2\times 683^2$                                                                             & $\ZZ/5\ZZ$   
		\\
		$707$ & $6$    & $+$   & $2^8\times7^2\times13^2\times19^2\times101^3$                                                             & $\ZZ/6\ZZ$   
		\\
		$731$ & $12$   & $-$   & $2^{52}\times17^6\times19^2\times43^5\times263^2\times479^2$                                              & $\ZZ/{12}\ZZ$
		\\
		$755$ & $12$   & $-$   & $2^{38}\times5^6\times41^4\times71^2\times 151^5\times503^2$                                             & $\ZZ/{12}$ 
		\\
		$779$ & $10$   & $+$   & $2^{40}\times19^8\times41^5\times311^2$                                                                   & $\ZZ/{10}\ZZ$ 
		\\
		$803$ & $10$   & $+$   & $2^{26}\times5^4\times11^4\times19^8\times73^5$                                                           & $\ZZ/{10}\ZZ$ 
		\\
		$827$ & $7$    & $-$   & $2^{12}\times7^2\times13^6\times827^3$                                                                   & $\ZZ/7\ZZ$    
		\\
		$851$ & $10$   & $+$   & $2^{44}\times7^2\times13^2\times23^4\times37^5\times167^2$                                              & $\ZZ/{10}\ZZ$   
		\\
		${875}$ & $10$   & $+$   & $2^{32}\times5^{13}\times7^{4}\times19^2\times37^4\times89^2$                                           & $\ZZ/{10}\ZZ$  
		\\
		$899$ & $14$   & $+$   & $2^{72}\times13^2\times19^4\times29^7\times31^6\times 647^2$                                            &  $\ZZ/{14}\ZZ$ 
		\\
		$923$ & $10$ & $+$ &  $2^{30} \times 5^4\times13^5\times19^2\times61^4\times71^4$ &  $\ZZ/{10}\ZZ$ 
		\\ 
		$947$ & $5$ & $+$ & $2^4\times13^2\times19^2\times947^2$ & $\ZZ/5\ZZ$ 
		\\ 
		$971$ & $15$ & $-$ & $2^{78}\times 41^4 \times 71^2\times503^2\times719^2\times971^7$  & $\ZZ/{15}\ZZ$ 
		\\
		$995$ & $8$ & $-$ & $2^{22}\times5^4\times7^2\times13^4\times19^2\times23^2\times199^3$ & $\ZZ/8\ZZ$ 
		\\\hline
	\end{tabular}
	\vspace{0.1in}
	\caption{Computation of the class number $h_n$, the sign $\sgn\left(\Delta\right)$ of $\Delta\left(\mathcal{P}_n\right)$, the structure of the class group $\Cl\left(n\right)$, and the prime factorization of $\left|\Delta\left(\mathcal{P}_n\right)\right|$ for all $n\equiv11\pmod{24}$ with $11\leqslant n \leqslant 995$.} 
\end{table} 
\FloatBarrier
\section{Concluding Remarks and Further Research}
We can now replace the Hilbert class polynomials $H_n$ and their discriminants $\Delta\left(H_n\right)$ with the Ramanujan polynomials $\mathcal{P}_n$ and their discriminants $\Delta\left(\mathcal{P}_n\right)$ to simplify computations wherever needed. 
Ramanujan polynomials $\mathcal{P}_n$ can be used in the generation of special curves, such as MNT curves \cite{OP1, OP2}, and in the generation of elliptic curves that do not necessarily have prime order \cite{A}.

Moreover, problems such as primality testing and proving \cite{A}, the generation of elliptic curve parameters \cite{KKP2} and the
representability of primes by quadratic forms \cite{Coz} could be considerably improved once we know more about Ramanujan polynomials $\mathcal{P}_n$ and the discriminant and traces. 
\section{Acknowledgements}
The majority of this research was done during the Research Science Institute (RSI) at MIT in the summer of 2022. First I would like to thank my mentor Alan Peng for his much-valued mentorship,  guidance and support, which included having daily meetings, sharing ideas, and clarifying any confusion, throughout the completion of this work. I would like to thank Prof. Andrew Sutherland from the MIT Math Department for suggesting this research project and Dr. Tanya Khovanova, Prof. Ankur Moitra, Prof. David Jerison and Dr. John Rickert for their helpful comments and suggestions on a rough draft of this manuscript, and for making this research project possible. I would also like to thank Prof. Aristides Kontogeorgis for giving us the pari-gp code to compute $\mathcal{P}_n$ for higher values of $n$. Thank you to the MIT and CEE for giving me the opportunity to attend the RSI and work on this project. Thank you to my sponsors, Mr. Nicholas Nash and Ms. Phalgun Raju, for making my participation in RSI possible. Finally, I would like to thank all my Rickoid friends who turned into a family and made my summer a memorable one. 


\begin{thebibliography}{10}	\bibitem{A}
	A. O. L. Atkin and F. Morain, Elliptic curves and primality proving. Math. Comp. \textbf{61} (1993), no. 203,
	29--68.
	\bibitem{Broker}
	Reinier Br\"oker and Peter Stevenhagen, Efficient CM-constructions of elliptic curves over
	finite fields, Mathematics of Computation \textbf{76} (2007), 2161--2179.
	\bibitem{Bruce}
	B. C. Berndt and H. H. Chan, Ramanujan and the modular $j$-invariant. Canad. Math. Bull.
	\textbf{42} (1999), 4, 427-440.
	\bibitem{Briggs}
	W. E. Briggs, An elementary proof of a theorem about the representations of primes by quadratic forms, Canadian J. Math. \textbf{6} (1954), pp. 353-363. 
	\bibitem{Coz}
	David A. Cox, Primes of the form $x^2 + ny^2$. John Wiley \& Sons Inc., New York, NY, 1989. 
	\bibitem{Dorman}
	Dorman, D.R. Singular moduli, modular polynomials, and the index of the closure of $\ZZ[j(\tau)]$ in $\QQ(j(\tau))$. Math. Ann. \textbf{283}, 177--191 (1989). 
	\bibitem{Gee} 
	A. Gee, Class invariants by Shimura’s reciprocity law. J. Th\'eor. Nombres Bordeaux 11(1999), 
	45-72.
	\bibitem{GZ}
	Gross, B.H., Zagier, D.B.: On singular moduli. J. Reine Angew. Math. \textbf{355}, 191--220 (1985).
	\bibitem{KK1}
	E. Konstantinou, A. Kontogeorgis, Computing Polynomials of the
	Ramanujan $t_n$ Class Invariants, Canad. Math. Bull. Vol. \textbf{52} (4), 2009.
	\bibitem{KK2}
	E. Konstantinou, A. Kontogeorgis, Y. Stamatiou, and C. Zaroliagis, Generating prime order elliptic
	curves: difficulties and efficiency considerations. In: International Conference on Information Security
	and Cryptology, Lecture Notes in Comput. Sci. 3506, Springer, Berlin, 2005, pp. 261-278.
	\bibitem{KKP2}
	Elisavet Konstantinou and Aristides Kontogeorgis, Ramanujan’s class invariants and their
	use in elliptic curve cryptography, Comput. Math. Appl. \textbf{59} (2010), no. 8, 2901--2917.
	\bibitem{Lay}
	Georg-Johann Lay and Horst G. Zimmer, Constructing elliptic curves with given group order over large finite fields, Algorithmic Number Theory Symposium--ANTS I (L. M. Adleman and
	M.D. Huang, eds.), Lecture Notes in Computer Science, \textbf{877}, 1994, pp. 250--263.
	\bibitem{OP1}
	A. Miyaji, M. Nakabayashi, S. Takano, New explicit conditions of elliptic curve traces for FR-reduction, IEICE Transactions on Fundamentals E84-A (\textbf{5})
	(2001) 1234--1243.
	\bibitem{Neukirch}
	J. Neukirch, Algebraic Number Theory, Springer Berlin, Heidelberg, 1999. 
	\bibitem{SR}
	S. Ramanujan, Notebooks. Vols. 1, 2, TIFR, Bombay, 1957.
	\bibitem{OP2}
	M. Scott, P.S.L.M. Barreto, Generating more MNT elliptic curves, Designs, Codes and Cryptography \textbf{38} (2006). 
	\bibitem{Silver}
	J. H. Silverman, Advanced topics in the arithmetic of elliptic curves, Graduate Texts in Mathematics
	\textbf{151}, Springer-Verlag, New York, 1994.
	\bibitem{Sg}
	Stein et al.: Sage Mathematics Software, Sage Development Team (2021). \url{http://www.sagemath.
	org}.
	\bibitem{Ye}
	Dongxi Ye, Revisiting the Gross-Zagier discriminant formula, Math. Nachrichten, \textbf{293}, 2020, 1801--1826
\end{thebibliography}
\end{document}